\newcommand{\Mm}{\mathcal M}
\newcommand{\Dd}{\mathcal D}
\newcommand{\sign}{{\rm sign}}
\newcommand{\varep}{\varepsilon}
\newcommand{\gabs}[1]{\left | #1 \right|}
\newcommand{\norm} [1]{\left\| {#1}\right\|}
\newcommand{\intb}[1]{\left\langle #1 \right\rangle}
\newcommand{\intd}[1]{\left( #1 \right)}
\newcommand{\ddt}{\frac d{dt} }
\newcommand{\eqdef}{\xlongequal{\text{def}}}%
\def\beq{\begin{equation}}
\def\eeq{\end{equation}} 
\def\beqs{\begin{equation*}}
\def\eeqs{\end{equation*}}
\def\bals{\begin{align*}}
\def\eals{\end{align*}}
\def\bspl{\begin{split}}
\def\espl{\end{split}}
\def\myclearpage{}
\title{GALERKIN FINITE ELEMENT METHOD FOR GENERALIZED
FORCHHEIMER EQUATION OF SLIGHTLY COMPRESSIBLE FLUIDS IN POROUS MEDIA}
\author{Thinh Kieu \footnotemark[2]}
\begin{document}

\maketitle
 
\renewcommand{\thefootnote}{\fnsymbol{footnote}}
\footnotetext[2]{Department of Mathematics, University of North Georgia, Gainesville Campus, 3820 Mundy Mill Rd., Oakwood, GA 30566, U.S.A. ({\tt thinh.kieu@ung.edu}).}               
               
\begin{abstract}  We consider the generalized Forchheimer flows for slightly compressible fluids. Using Muskat's and Ward's general form of Forchheimer equations, we describe the fluid dynamics by a nonlinear degenerate parabolic equation for the density. 
 We study Galerkin finite elements method for the initial boundary value problem. The existence and uniqueness of the approximation are proved. The prior estimates for the solutions in $L^\infty(0,T;L^q(\Omega)), q\ge 2$, time derivative in $L^\infty(0,T;L^2(\Omega))$ and gradient in $L^\infty(0,T;W^{1,2-a}(\Omega)),$ with $a\in (0,1)$ are established. Error estimates for the density variable are derived in several norms for both continuous and discrete time procedures. Numerical experiments using backward Euler scheme confirm the theoretical analysis regarding convergence rates.  
 \end{abstract}
            
            
            
\begin{keywords}
Porous media, immersible flow, error analysis, Galerkin finite element, nonlinear degenerate parabolic equations, generalized Forchheimer equations, numerical analysis.
\end{keywords}

\begin{AMS}
65M12, 65M15, 65M60, 35Q35, 76S05.
\end{AMS}

\pagestyle{myheadings}
\thispagestyle{plain}
\markboth{Thinh Kieu}{Galerkin finite element method for Forchheimer equation of slightly compressible fluids}
            
            
\myclearpage    
\section {Introduction }

Fluid flow through porous media has been studied in many fields such as chemistry, physics, engineering, and geology. The most common equation to describe fluid flows in porous media is the Darcy law   
\beq\label{Darcy}
-\nabla p = \frac {\mu}{\kappa} v,
\eeq
where $p$, $v$, $\mu$, $\kappa$ are, respectively (resp.), the pressure, velocity, absolute viscosity and permeability. 

For the Reynolds number large, for instance the high-velocity flow \cite{Muskatbook,BearBook}, Darcy's law is not valid any more. A nonlinear relationship between velocity and gradient of pressure is introduced, as suggested by Forchheimer in \cite{Forchh1901,ForchheimerBook},  by adding the higher order term of velocity to the Darcy law. It is known as generalized Forchheimer laws, see \cite{Muskatbook,Ward64,BearBook,NieldBook,StraughanBook} and references therein. Forchheimer established the following three nonlinear empirical models:  
\beq\label{2term}
-\nabla p=av+b|v|v,
\quad
-\nabla p=av+b|v|v+c |v|^2 v,
\quad
-\nabla p=av+d|v|^{m-1}v, m\in(1, 2).
\eeq
Above, the positive constants $a,b,c,d$ are obtained from experiments.

The generalized Forchheimer equation of \eqref{Darcy} and \eqref{2term} were proposed in \cite{ABHI1,HI1,HI2} of the form 
\beq\label{gF}
-\nabla p =\sum_{i=0}^N a_i |v|^{\alpha_i}v. 
\eeq  

These equations are analyzed numerically in \cite{Doug1993,EJP05,K1},
theoretically in \cite{ABHI1,HI2,HIKS1,HKP1,HK1,HK2} for single phase flows, and also in \cite{HIK1,HIK2} for two phase flows.

In order to take into account the the dependence on density in generalized Forchheimer equation, we modify \eqref{gF} using dimension analysis by Muskat \cite{Muskatbook} and Ward \cite{Ward64}. They proposed the following equation for both laminar and turbulent flows in porous media:
\beq\label{W}
-\nabla p =F(v^\alpha \kappa^{\frac {\alpha-3} 2} \rho^{\alpha-1} \mu^{2-\alpha}),\text{ where  $F$ is a function of one variable.}
\eeq 
In particular, when $\alpha=1,2$, Ward \cite{Ward64} established from experimental data that
\beq\label{FW} 
-\nabla p=\frac{\mu}{\kappa} v+c_F\frac{\rho}{\sqrt \kappa}|v|v,\quad \text{where }c_F>0.
\eeq

Combining  \eqref{gF} with the suggestive form \eqref{W} for the dependence on $\rho$ and $v$, we propose the following equation 
 \beq\label{FM}
-\nabla p= \sum_{i=0}^N a_i \rho^{\alpha_i} |v|^{\alpha_i} v,
 \eeq
where $N\ge 1$, $\alpha_0=0<\alpha_1<\ldots<\alpha_N$ are real numbers, the coefficients $a_0, \ldots, a_N$ are positive.  
Here, the viscosity and permeability are considered constant and we do not specify the dependence of $a_i$'s on them.

Multiplying both sides of previous equation to $\rho$, we obtain 
 \beq\label{eq1}
 g( |\rho v|) \rho v   =-\rho\nabla p,
 \eeq
where the function $g$ is a generalized polynomial with non-negative coefficients. More precisely,  the function $g:\mathbb{R}^+\rightarrow\mathbb{R}^+$ is of the form
\beq\label{eq2}
g(s)=a_0s^{\alpha_0} + a_1s^{\alpha_1}+\cdots +a_Ns^{\alpha_N},\quad s\ge 0, 
\eeq 
where $N\ge 1,\alpha_0=0<\alpha_1<\ldots<\alpha_N$ are fixed real numbers, the coefficients $a_0, \ldots, a_N$ are non-negative numbers with $a_0>0$ and $a_N>0$.  

The equation of state which, for slightly compressible fluids, is
\beqs
\frac 1\rho \frac{d\rho}{dp}=\frac 1\kappa=const.>0.
\eeqs
Hence
\beq \label{eq3}
\nabla \rho = \frac{1}{\kappa} \rho\nabla p, 
\quad \text{ or }\quad  \rho \nabla p=\kappa\nabla \rho.
\eeq
Combining \eqref{eq2} and \eqref{eq3} implies that  
 \beq
 g( |\rho v|) \rho v   =-\kappa\nabla \rho .
 \eeq
By rescaling coefficients of $g$ we have
\[
 g( |\rho v|) \rho v   =-\nabla \rho.
\]
Hence
\beq\label{ru} 
\rho v=- K(|\nabla \rho |)\nabla \rho,
\eeq
where the function $K: \mathbb{R}^+\rightarrow\mathbb{R}^+$ is defined for $\xi\ge 0$ by
\beq\label{Kdef}
K(\xi)=\frac{1}{g(s(\xi))}, \text{ with }  s=s(\xi) \text{  being the unique non-negative solution of } sg(s)=\xi.
\eeq

The continuity equation is
\beq\label{con-law}
\phi\rho_t+{\rm div }(\rho v)=f.
\eeq
where constant  $\phi\in(0,1)$ is the porosity, $f$ is external mass flow rate . 

By combining \eqref{ru} and \eqref{con-law} we obtain
\beq\label{utporo}
\phi\rho_t-\nabla \cdot(K(|\nabla \rho|)\nabla \rho) = f.
\eeq
Then by scaling the time variable in \eqref{utporo} implies   
\beq\label{maineq}
\rho_t- \nabla\cdot(K(|\nabla \rho|)\nabla \rho)=f.
\eeq

The system of equations describing the fluid motion reduces to a scalar equation of
density function. This is a nonlinear parabolic equation which degenerates as the gradient of density  approach to infinity. 

For the existence and regularity theory of degenerate parabolic equations, see e.g. \cite{MR2566733, LadyParaBook68,HIKS1}. The popular numerical method for modeling flow in porous media are mixed finite element method. Arbogast, Wheeler and Zhang \cite {ATWZ96} first analyzed mixed finite element approximations of degenerate parabolic equation arising in flow in porous media. Not so long later,  Woodward and Dawson in \cite{WCD00} study of expanded mixed finite element methods for a nonlinear parabolic equation modeling flow into variably saturated porous media. In recently years, the Galerkin finite element method for a coupled nonlinear degenerate system of advection-diffusion equations has studied in \cite{F14,F10,F06,F07,FS04,FS95}. In their analysis, the Kirchhoff transformation is used to move the nonlinearity from coefficient $K$ to the gradient and thus simplifies analysis of the equations. This transformation does not applicable for the equation \eqref{maineq}.  

In this paper,  we combine techniques in \cite {HI1,HI2,HIKS1,HK1,HK2,HKP1} utilizing the special~structures of equation to obtain the error estimates for the solutions in several norms of interest. We obtain these results without any extra regularity assumption, though the order of error estimates are far from optimal order.      

 The paper is organized as follows:  
  In \S \ref{Intrsec} we introduce notations, relevant results in \cite{ABHI1,HI1,K1} and suitable trace estimates in Lemma \ref{traceest} for the nature of our equation.   
 In \S \ref{GalerkinMethod} we consider the semidiscrete finite element  Galerkin approximation and an implicit backward difference time discretization to solve problem \eqref{rho:eq}. 
  In \S \ref{Bsec} we establish many bounds for solutions, its time derivative and gradient  to problem \eqref{weakform} and \eqref{semidiscreteform} in Lebesgue norms. 
In \S \ref{errSec}  we analyze two version of a Galerkin finite element approximations, the continuous Galerkin method and  the discrete Galerkin method. Using the monotonicity properties of Forchheimer equation and the boundedness of solutions, the {\it priori} error estimates are derived for solution in $L^q$-norm with $2\le q\le \infty$  and for gradient of solution in $L^{2-a}$-norm. 
Finally, in \S \ref{Num-result} numerical examples using the Lagrange elements of order $2$ are carried out in two-dimension. The results strongly support our theoretical analysis regarding convergence rates.  


\section{Notations and preliminary results}\label{Intrsec}

Suppose that $\Omega$ is an open, bounded subset of $\mathbb{R}^d$, with $d=2,3,\ldots$, and has $C^1$-boundary $\Gamma=\partial \Omega$. Let $L^2(\Omega)$ be the set of square integrable functions on $\Omega$ and $( L^2(\Omega))^d$ the space of $d$-dimensional vectors which have all components in $L^2(\Omega)$.  We denote $(\cdot, \cdot)$ the inner product in either $L^2(\Omega)$ or $(L^2(\Omega))^d$ that is
\beqs
( \xi,\eta )=\int_\Omega \xi\eta dx \quad \text{ or } (\boldsymbol{\xi},\boldsymbol \eta )=\int_\Omega \boldsymbol{\xi}\cdot \boldsymbol{\eta} dx. 
\eeqs      

The notation $\intb{\cdot ,\cdot}$ will be used for the $L^2(\partial \Omega)$ inner-product and $ \norm{u}_{L^p}=\norm{u}_{L^p(\Omega)}$ for standard  Lebesgue norm of the measurable function.    
 The notation $\norm {\cdot}$ will means scalar norm $\norm{\cdot}_{L^2(\Omega)}$ or vector norm $\norm{\cdot}_{(L^2(\Omega))^d}$. We denote $\norm{u}_{L^p(L^q)} =\norm{u}_{L^p(0,T; L^q(\Omega))}, 1\le p,q<\infty$ means the mixed Lebesgue norm for a function $u$ while $\norm{u}_{L^p(H^q)}=\norm{u}_{L^p(0,T;H^q(\Omega))}, 1\le p,q<\infty$ stand for the mixed Sobolev-Lebesgue norm of a function $u$. 

For $1\le q\le +\infty$ and $m$ any nonnegative integer, let
\beqs
W^{m,q}(\Omega) = \big\{u\in L^q(\Omega), D^q u\in L^q(\Omega), |q|\le m \big\}
\eeqs 
denote a Sobolev space endowed with the norm 
\beqs
\norm{u}_{m,q} =
\Big( \sum_{|\alpha|\le m} \norm{D^\alpha u}^q_{L^q(\Omega)} \Big)^{\frac 1 q}.
\eeqs    
Define $H^m(\Omega)= W^{m,2}(\Omega)$ with the norm $\norm{\cdot}_m =\norm{\cdot }_{m,2}$. 

Throughout this paper, we use short hand notations, 
\beqs
I=(0,T), \quad \norm{\rho(t)} = \norm{ \rho(\cdot, t)}_{L^2(\Omega)}, \forall t\in I \quad \text{
 and } \quad \rho^0(\cdot) =  \rho(\cdot,0).
 \eeqs
 
 Also our calculations frequently use the following exponents
\beq\label{a-const }
   a=\frac{\alpha_N}{\alpha_N+1}= \frac{\deg (g)}{\deg (g)+1},
  \eeq
  and
  \beq\label{MLGdef}
  \beta =2-a,\quad \lambda = \frac {2-a}{1-a}=\frac{\beta}{\beta-1},\quad \gamma=\frac{a}{2-a}=\frac a{\beta}.
  \eeq  

The arguments $C, C_0, C_1,\ldots$ will represent for positive generic constants and their values  depend on exponents, coefficients of polynomial  $g$,  the spatial dimension $d$ and domain $\Omega$, independent of the initial and boundary data, size of mesh and time step. These constants may be different place by place.

\begin{lemma}[cf. \cite{ABHI1, HI1}]
 The function $K(\xi)$ has the following properties
 
(i) $K: [0,\infty)\to (0,a_0^{-1}]$ and it decreases in $\xi,$  

 (ii) For any $n\ge 1$, the function $K(\xi)\xi^n$ increasing and $K(\xi)\xi^n\ge 0$

(iii) Type of degeneracy  \beq\label{i:ineq1}  \frac{c_1}{(1+\xi)^a}\leq K(\xi)\leq \frac{c_2}{(1+\xi)^a},  \eeq

(iv) For all $n\ge 1,$ \beq\label{i:ineq2} c_3(\xi^{n-a}-1)\leq K(\xi)\xi^n\leq c_2\xi^{n-a}, \eeq

 (iv) Relation with its derivative \beq\label{i:ineq3} -aK(\xi)\leq K'(\xi)\xi\leq 0, \eeq
  where $c_1, c_2, c_3$ are positive constants depending on $\Omega$ and $g$. 
  
\end{lemma}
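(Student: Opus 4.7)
The plan is to exploit the single relation $sg(s)=\xi$ that defines $s(\xi)$, together with the explicit polynomial form of $g$, and to read off each claim from elementary manipulations.

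First, I would set up the bijection $s \mapsto sg(s)$. Since every $a_i\ge 0$ and $a_0>0$, $g$ is continuous, strictly positive on $[0,\infty)$, and nondecreasing, so $sg(s)$ is continuous, strictly increasing, $0$ at $s=0$, and $+\infty$ at $s=\infty$. Thus $s(\xi)$ is well-defined, continuous, strictly increasing, $s(0)=0$ and $s(\xi)\to\infty$. Then $K(\xi)=1/g(s(\xi))$ is a composition of a decreasing function with an increasing one, giving item (i), and the endpoint values $K(0)=1/a_0$ and $K(\xi)\to 0$ are immediate. For (ii) the decisive identity is $K(\xi)\xi=\xi/g(s(\xi))=s(\xi)$; hence for $n\ge1$, $K(\xi)\xi^n=s(\xi)\,\xi^{n-1}$ is a product of two nonnegative nondecreasing functions, and it vanishes at $\xi=0$.

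For (iii) I would compare $g(s)$ to $(1+s)^{\alpha_N}$ uniformly in $s\ge 0$. Because $\alpha_0=0<\alpha_1<\dots<\alpha_N$ with $a_0,a_N>0$, there exist constants $c,c'>0$ with $c(1+s^{\alpha_N})\le g(s)\le c'(1+s^{\alpha_N})$, and in turn $c''(1+s)^{\alpha_N+1}\le 1+sg(s)=1+\xi\le c'''(1+s)^{\alpha_N+1}$. Raising to the power $a=\alpha_N/(\alpha_N+1)$ yields $(1+\xi)^a\sim (1+s)^{\alpha_N}\sim g(s)=1/K(\xi)$, which is exactly \eqref{i:ineq1}. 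Claim (iv) then drops out of (iii): the upper bound uses $(1+\xi)^a\ge \xi^a$, and the lower bound splits into $\xi\ge 1$, where $(1+\xi)^a\le 2^a\xi^a$ gives $K(\xi)\xi^n\ge c\,\xi^{n-a}$, and $\xi\le 1$, where $\xi^{n-a}-1\le 0$ makes the inequality trivial. One absorbs the two regimes into a single constant $c_3$ at the end.

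For (v) I would differentiate the defining relation. From $sg(s)=\xi$ one gets $s'(\xi)=1/(g(s)+sg'(s))$, hence
\beqs
\xi K'(\xi)=-\frac{\xi\,g'(s)s'(\xi)}{g(s)^2}=-\frac{s\,g'(s)}{g(s)\bigl(g(s)+sg'(s)\bigr)}.
\eeqs
The upper inequality $K'(\xi)\xi\le 0$ is immediate since $g,g'\ge 0$. For the lower inequality, dividing by $K(\xi)=1/g(s)$, it suffices to prove $sg'(s)\le \frac{a}{1-a}g(s)=\alpha_N g(s)$; and this holds termwise, since $sg'(s)=\sum_{i=0}^N a_i\alpha_i s^{\alpha_i}$ while $\alpha_N g(s)=\sum_{i=0}^N a_i\alpha_N s^{\alpha_i}$ and $\alpha_i\le\alpha_N$.

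The main obstacle I anticipate is (iii): getting the two-sided comparison $g(s(\xi))\asymp (1+\xi)^a$ cleanly across both the small-$\xi$ regime (where $s$ is small and $g\approx a_0$) and the large-$\xi$ regime (where $g\sim a_N s^{\alpha_N}$). Once that is in place with a single explicit constant, parts (iv) and (v) are essentially algebraic corollaries.
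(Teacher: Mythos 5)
Your proof is correct. The paper itself gives no proof of this lemma (it is quoted from \cite{ABHI1,HI1}), and your argument — the identity $K(\xi)\xi=s(\xi)$ for (ii), the two-sided comparison $g(s)\asymp 1+s^{\alpha_N}$ combined with $1+\xi\asymp(1+s)^{\alpha_N+1}$ for (iii), and implicit differentiation of $sg(s)=\xi$ reducing (v) to the termwise bound $sg'(s)\le\alpha_N g(s)$ — is essentially the standard derivation used in those references, so nothing further is needed.
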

We define 
\beq\label{Hdef}
H(\xi)=\int_0^{\xi^2} K(\sqrt{s}) dx, \text{~for~} \xi\geq 0. 
\eeq
The function $H(\xi)$ can compare with $\xi$ and $K(\xi)$ by
\beq\label{i:ineq4}
K(\xi)\xi^2 \leq H(\xi)\leq 2K(\xi)\xi^2. 
\eeq


For the monotonicity and continuity of the differential operator in \eqref{maineq} we have the following results. 
\begin{lemma}[cf. \cite{HI1}] One has 

{\rm (i)} For all $y, y' \in \mathbb{R}^d$, 
\beq\label{Qineq}
\big(K(|y'|)y' -K(|y|)y \big)\cdot(y'-y)\geq (\beta-1)K( \max\{ |y|, |y'|\} )|y' -y|^2 .
\eeq      

 {\rm (ii)} For the vector functions $s_1, s_2$, there is a positive constant $C$  such that
\beq\label{Mono}
 \big( K(|s_1|)s_1-K(|s_2|)s_2,s_1 -s_2\big)\geq C\omega\norm{s_1-s_2}_{L^{\beta}(\Omega)}^2,
\eeq
where 
$\omega =\big(1+ \max\{\|s_1\|_{L^{\beta}(\Omega)} ,  \|s_2\|_{L^{\beta}(\Omega)} \}\big)^{-a}. $
\end{lemma}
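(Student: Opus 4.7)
The plan is to establish \textrm{(i)} by a one-dimensional parametrization between $y$ and $y'$, then deduce \textrm{(ii)} from \textrm{(i)} by integrating the pointwise inequality over $\Omega$ and applying H\"older's inequality to recover the $L^\beta$-norm on the right-hand side.

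For \textrm{(i)}, I would set $\gamma(t)=y+t(y'-y)$ for $t\in[0,1]$ and write
\beqs
\bigl(K(|y'|)y'-K(|y|)y\bigr)\cdot(y'-y)=\int_0^1 \frac{d}{dt}\bigl[K(|\gamma(t)|)\gamma(t)\bigr]\cdot(y'-y)\,dt.
\eeqs
A direct chain-rule computation gives
\beqs
\frac{d}{dt}\bigl[K(|\gamma|)\gamma\bigr]\cdot(y'-y)=K(|\gamma|)|y'-y|^2+K'(|\gamma|)\frac{\bigl(\gamma\cdot(y'-y)\bigr)^2}{|\gamma|}.
\eeqs
Since $K'(\xi)\le 0$ by \textrm{(i)} of the previous lemma, I would apply Cauchy--Schwarz in the form $(\gamma\cdot(y'-y))^2\le|\gamma|^2|y'-y|^2$ and then use $K'(|\gamma|)|\gamma|\ge-aK(|\gamma|)$ from \eqref{i:ineq3}; this makes the integrand at least $(1-a)K(|\gamma(t)|)|y'-y|^2=(\beta-1)K(|\gamma(t)|)|y'-y|^2$. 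Finally, because $K$ is nonincreasing and $|\gamma(t)|\le\max\{|y|,|y'|\}$, one gets $K(|\gamma(t)|)\ge K(\max\{|y|,|y'|\})$, and integrating $t$ from $0$ to $1$ yields \eqref{Qineq}.

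For \textrm{(ii)}, apply \eqref{Qineq} pointwise with $y=s_2(x)$, $y'=s_1(x)$ and integrate; together with the lower bound $K(\xi)\ge c_1(1+\xi)^{-a}$ from \eqref{i:ineq1}, this gives
\beqs
\bigl(K(|s_1|)s_1-K(|s_2|)s_2,s_1-s_2\bigr)\ge C\int_\Omega\frac{|s_1-s_2|^2}{\bigl(1+M(x)\bigr)^a}\,dx,\qquad M(x):=\max\{|s_1(x)|,|s_2(x)|\}.
\eeqs
To convert this into an $L^\beta$-bound I would write
\beqs
|s_1-s_2|^\beta=\frac{|s_1-s_2|^\beta}{(1+M)^{a\beta/2}}\cdot(1+M)^{a\beta/2}
\eeqs
and apply H\"older with conjugate exponents $2/\beta$ and $2/a$ (valid since $\beta=2-a\in(1,2)$). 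The first factor integrates to $\bigl(\int|s_1-s_2|^2(1+M)^{-a}\bigr)^{\beta/2}$; the second, to $\bigl(\int(1+M)^\beta\bigr)^{a/2}$. Bounding $(1+M)^\beta\le C(1+|s_1|^\beta+|s_2|^\beta)$ and integrating gives $\int(1+M)^\beta\le C(1+\max\{\|s_1\|_{L^\beta},\|s_2\|_{L^\beta}\})^\beta$, and rearranging the inequality yields \eqref{Mono}.

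The main subtlety is the degeneracy at $\gamma(t)=0$ in the chain-rule step of \textrm{(i)}: the quotient $K'(|\gamma|)/|\gamma|$ is only well-defined as a limit there, so I would either treat the case $y=y'$ trivially, or note that the combination $K'(|\gamma|)|\gamma|$ appearing after Cauchy--Schwarz has a continuous extension by \eqref{i:ineq3}. The other bookkeeping item is verifying that H\"older's exponents balance correctly in \textrm{(ii)}, which is where the definition $\beta=2-a$ and $\omega=(1+M)^{-a}$ with the pairing $\beta/2+a/2=1$ play a decisive role; once these are set up the estimate is a one-line calculation.
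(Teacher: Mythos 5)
Your proof is correct: part (i) via the line integral $\int_0^1\frac{d}{dt}\bigl[K(|\gamma(t)|)\gamma(t)\bigr]\cdot(y'-y)\,dt$ combined with Cauchy--Schwarz and \eqref{i:ineq3}, and part (ii) via the pointwise bound plus H\"older with the conjugate pair $2/\beta$, $2/a$ (which balances precisely because $\beta=2-a$), is exactly the standard argument; the paper itself does not reprove this lemma but imports it from \cite{HI1}, where the proof follows the same route. Your handling of the degeneracy at $\gamma(t)=0$ and the final bound $\bigl(\int_\Omega(1+M)^\beta\bigr)^{a/\beta}\le C\omega^{-1}$ are both sound.
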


\begin{lemma}[cf. \cite{K1}] \label{Lips}  For all $y, y' \in \mathbb{R}^d$. There exist a positive constant $C$ depending on polynomial  $g$, the spatial dimension $d$ and domain $\Omega$ such that 
\beq\label{Kcont}
   \left|K(|y'|)y' -K(|y|)y\right| \leq C|y' -y|.
\eeq  
\end{lemma}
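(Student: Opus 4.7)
The plan is to establish \eqref{Kcont} by showing that the vector field $F:\mathbb{R}^d\to\mathbb{R}^d$ defined by $F(y)=K(|y|)y$ is globally Lipschitz with constant $1/a_0$. The natural route is to compute the Jacobian $DF$ off the origin, bound its operator norm uniformly using the structural properties of $K$ from the preceding lemma, and then handle the mild singularity at $y=0$ separately.

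For $y\neq 0$, write $r=|y|$ and differentiate componentwise to obtain
\beqs
DF(y) = K(r)\,I + \frac{K'(r)}{r}\,y\otimes y,
\eeqs
a symmetric matrix whose eigenvalues are $K(r)$, with multiplicity $d-1$ on the hyperplane $y^{\perp}$, together with $K(r)+K'(r)r$ in the direction $y/|y|$. Property (iii), $-aK(r)\le K'(r)r\le 0$, confines the second eigenvalue to $[(1-a)K(r),K(r)]$, while property (i) gives $K(r)\le 1/a_0$. Hence $\|DF(y)\|_{\mathrm{op}}\le 1/a_0$ for every $y\neq 0$, and the mean value theorem applied on any line segment avoiding the origin yields $|F(y')-F(y)|\le (1/a_0)|y'-y|$.

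The remaining case is when $0$ lies on the segment, i.e., $y'=-\lambda y$ for some $\lambda\ge 0$, or one endpoint is $0$. Here I would use the identity $K(r)r=s(r)$ together with the elementary bound $s(r)\le r/a_0$, which follows from $r=s(r)g(s(r))\ge a_0\,s(r)$ since $g(s)\ge a_0$. This gives $|F(y)-F(y')|\le |F(y)|+|F(y')|=s(|y|)+s(|y'|)\le (|y|+|y'|)/a_0 = |y-y'|/a_0$, since $y$ and $y'$ point in opposite directions. The only real obstacle is this technical point at the origin, where $F$ is not differentiable; but $F$ is continuous there with $F(0)=0$, and the cheap scalar estimate on $s$ disposes of the degenerate-segment case. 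No additional hypotheses on $g$ or on the smoothness of $K$ beyond those already recorded are required, and the Lipschitz constant depends only on $a_0$ (hence on the polynomial $g$).
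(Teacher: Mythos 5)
Your proof is correct, and it follows what is essentially the standard argument for this lemma (the paper itself gives no proof, deferring to \cite{K1}, where the estimate is likewise obtained by differentiating $y\mapsto K(|y|)y$ and invoking $-aK(\xi)\le K'(\xi)\xi\le 0$ to bound the radial eigenvalue $K(r)+K'(r)r$ and the tangential eigenvalue $K(r)$ by $K(r)\le a_0^{-1}$). Your separate treatment of segments through the origin via $K(\xi)\xi=s(\xi)\le \xi/a_0$ is sound; alternatively that case follows from the off-origin bound by continuity of $F$ and a limiting argument, but your explicit computation is cleaner and even identifies the sharp constant $C=a_0^{-1}$.
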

Next we derive trace estimates suitable to our nonlinear problem.
\begin{lemma}\label{traceest}
(i) Let $q\ge 2$. Assume $v(x)$ is a function defined on $\Omega$. If $|v|^{q-1}\in W^{1,1}(\Omega)$ then for all $\varep>0$,      
 \beq\label{sec2}
\int_\Gamma |v|^{q-1}d\sigma\le C\norm{v}_{L^q(\Omega)}^{q-1} + \varep\int_\Omega |v|^{q-2}|\nabla v|^{\beta} dx +C\varep^{-\frac 1{\beta-1}}\norm{v}_{L^q(\Omega)}^{q-2}.
\eeq

(ii) If $u\in L^\infty(\Gamma)$ and $|v|\in W^{1,1}(\Omega)$ then for all $\varep>0,$
\beq\label{bnd-eps}
\gabs{ \langle u, v\rangle } \le \varep \left(\norm{v}^2+ \norm{\nabla v}_{L^{\beta}(\Omega)}^{\beta}\right)+C\Big( \varep^{-1}\norm{u}_{L^\infty(\Gamma)}^2 +\varep^{-\frac 1{\beta-1}} \norm{u}_{L^\infty(\Gamma)}^{^\lambda}\Big).
\eeq
In particular case, 
\beq\label{bnd-est}
\gabs{ \langle u, v\rangle } \le \frac 14 \left(\norm{v}^2+ \norm{\nabla v}_{L^{\beta}(\Omega)}^{\beta}\right)+C\Big( 1 + \norm{u}_{L^\infty(\Gamma)}^{^\lambda}\Big).
\eeq
Above, $C$ is a positive constant independent of  $u,$ $v,$ $\varep.$
\end{lemma}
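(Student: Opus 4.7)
The plan is to derive (i) from the standard $W^{1,1}$--trace inequality applied to the composite function $w=|v|^{q-1}$, then obtain (ii) as a direct consequence of (i) with $q=2$.

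For (i), I would start from the trace inequality
\beqs
\int_\Gamma |w|\,d\sigma \le C\bigl(\|w\|_{L^1(\Omega)}+\|\nabla w\|_{L^1(\Omega)}\bigr)
\eeqs
valid for $w\in W^{1,1}(\Omega)$, and set $w=|v|^{q-1}$, so that $|\nabla w|\le (q-1)|v|^{q-2}|\nabla v|$. The volume term $\|w\|_{L^1(\Omega)}=\int_\Omega |v|^{q-1}dx$ is controlled by H\"older by $|\Omega|^{1/q}\|v\|_{L^q}^{q-1}$. For the gradient term, I would split
\beqs
|v|^{q-2}|\nabla v|=\bigl(|v|^{(q-2)/\beta}|\nabla v|\bigr)\cdot |v|^{(q-2)(\beta-1)/\beta}
\eeqs
and apply Young's inequality with exponents $\beta$ and $\lambda=\beta/(\beta-1)$. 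The first factor raised to $\beta$ returns $|v|^{q-2}|\nabla v|^\beta$, and the second raised to $\lambda$ returns $|v|^{q-2}$; another H\"older controls $\int_\Omega |v|^{q-2}dx$ by $|\Omega|^{2/q}\|v\|_{L^q}^{q-2}$. This yields exactly the three terms in \eqref{sec2}.

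For (ii), I would bound $|\langle u,v\rangle|\le \|u\|_{L^\infty(\Gamma)}\int_\Gamma|v|\,d\sigma$, then apply part (i) with $q=2$ and a free parameter $\varep'>0$:
\beqs
\int_\Gamma |v|\,d\sigma\le C\|v\|_{L^2}+\varep'\|\nabla v\|_{L^\beta(\Omega)}^\beta+C(\varep')^{-1/(\beta-1)}.
\eeqs
To absorb the $\|u\|_{L^\infty(\Gamma)}$ factor cleanly in front of the gradient term, I would choose $\varep'=\varep/\|u\|_{L^\infty(\Gamma)}$ (trivial case $\|u\|_{L^\infty(\Gamma)}=0$ aside). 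Then $\|u\|_{L^\infty(\Gamma)}\cdot\varep'=\varep$ and $\|u\|_{L^\infty(\Gamma)}(\varep')^{-1/(\beta-1)}=\varep^{-1/(\beta-1)}\|u\|_{L^\infty(\Gamma)}^\lambda$ since $1+\tfrac{1}{\beta-1}=\lambda$. The remaining product $\|u\|_{L^\infty(\Gamma)}\|v\|_{L^2}$ is handled by ordinary Young: $\le \varep\|v\|^2+C\varep^{-1}\|u\|_{L^\infty(\Gamma)}^2$. Collecting terms gives \eqref{bnd-eps}. The specialized estimate \eqref{bnd-est} follows by setting $\varep=1/4$ and noting that $\lambda=(2-a)/(1-a)>2$, so $\|u\|_{L^\infty(\Gamma)}^2\le 1+\|u\|_{L^\infty(\Gamma)}^\lambda$.

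The only mildly technical point is the Young splitting in (i): one must pick the exponents so that the resulting integrand on the ``small'' side is exactly $|v|^{q-2}|\nabla v|^\beta$ (to match the degeneracy of $K$) while the ``large'' side reduces back to a power of $\|v\|_{L^q}$; the identity $\lambda=\beta/(\beta-1)$ makes both balance. Once this splitting is set up correctly, (i) is a direct computation and (ii) is essentially a rescaling of $\varep'$.
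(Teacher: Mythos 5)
Your proposal is correct and follows essentially the same route as the paper: the $W^{1,1}$ trace inequality applied to $|v|^{q-1}$, the $\varepsilon$-weighted Young splitting of $|v|^{q-2}|\nabla v|$ with exponents $\beta$ and $\lambda$, and for (ii) the rescaling $\varepsilon'=\varepsilon\|u\|_{L^\infty(\Gamma)}^{-1}$ together with $1+\tfrac{1}{\beta-1}=\lambda$ and a final Young step using $\lambda>2$. No substantive differences.
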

\begin{proof} We recall the trace theorem 
\beqs
\int_\Gamma |\phi| dx \le C\int_\Omega |\phi| dx +C\int_\Omega |\nabla \phi|dx, 
\eeqs
for all $\phi\in W^{1,1}(\Omega),$ where $C$ are positive constants depending on $\Omega$. Applying the Trace theorem to $\phi=|v|^{q-1}$ shows that   
\beq\label{trace1}
\int_\Gamma |v|^{q-1}d\sigma\le C\int_\Omega |v|^{q-1}dx + C\int_\Omega |v|^{q-2}|\nabla v| dx.
\eeq
Note that $\frac 1\beta + \frac1\lambda=1$ and $ \frac \lambda \beta =\frac 1{\beta-1} $. Using Young's inequality with exponent $\beta$ and $\lambda$, we find that for $\varep>0$
\beq\label{Y1}
  \begin{aligned}
 |v|^{q-2}|\nabla v| &= \varep^{\frac1{\beta}}|\rho_h|^{\frac{q-2}{\beta}}|\nabla v| \cdot \varep^{-\frac1{\beta}}|v|^{\frac{q-2}{\lambda}}\\
 &\le  \varep |v|^{q-2}|\nabla v|^{\beta}+C\varep^{-\frac 1{\beta-1}}  |v|^{q-2}. 
 \end{aligned} 
 \eeq
 Combining \eqref{trace1}, \eqref{Y1} we obtain  
 \beq\label{aa}
 \int_\Gamma |v|^{q-1}d\sigma\le C\int_\Omega |v|^{q-1}dx + \varep \int_\Omega |v|^{q-2}|\nabla v|^{\beta}dx+C\varep^{-\frac 1{\beta-1}}  \int_{\Omega} |v|^{q-2}  dx.
 \eeq
 
Inequality  \eqref{sec2} follows by using H\"older's inequality to the first term of the right hand side in \eqref{aa}.

(ii) We have 
\beq
\gabs{ \langle u, v\rangle }
\le \norm{u}_{L^\infty(\Gamma)}\int_\Gamma |v|d\sigma.
\eeq
For all $\delta>0,$ using \eqref{aa} with $q=2$ gives, 
\beqs
\int_\Gamma |v|d\sigma\le C\int_{\Omega} |v| dx  + \delta\norm{\nabla v}_{L^{\beta}}^{\beta} +C\delta^{-\frac 1{\beta-1 }}.
\eeqs 
Applying Young's inequality leads to 
\beqs
\int_\Gamma |v|d\sigma\le  \delta\norm{v}^2+C\delta^{-1} + \delta\norm{\nabla v}_{L^{\beta}}^{\beta} +C\delta^{-\frac 1{\beta-1 }},
\eeqs
which gives 
\beqs
\gabs{ \langle u, v\rangle } \le \norm{u}_{L^\infty(\Gamma)}\Big( \delta\norm{v}^2+C\delta^{-1} + \delta\norm{\nabla v}_{L^{\beta}}^{\beta} +C\delta^{-\frac 1{\beta-1}}\Big).
\eeqs

If $\norm{u}_{L^\infty(\Gamma)} =0$  then \eqref{bnd-eps} clearly holds true.

Otherwise, selecting $\delta = \varep  \norm{u}_{L^\infty(\Gamma)}^{-1} $ and the fact that $\frac 1{\beta-1} + 1=\lambda$  we obtain estimate \eqref{bnd-eps}.

Estimate \eqref{bnd-est} follows by choosing $\varep = 1/4$ in \eqref{bnd-eps} and using Young's inequality.  
\end{proof}

\section{The Galerkin finite element method}\label{GalerkinMethod}
Our aim is to study equation \eqref{maineq} for density of slightly compressible fluids in bounded
domain in porous media. The fluid flows are subject to some conditions on the boundary. 


We consider the initial boundary value problem associated with \eqref{maineq} ,
\beq\label{rho:eq}
\begin{cases}
\rho_ t - \nabla \cdot (K (|\nabla \rho|)\nabla \rho  ) =f &\text {in }  \Omega\times I,\\
\rho(x,0)=\rho^0(x) &\text {in } \Omega,\\
K(|\nabla \rho|)\nabla \rho \cdot \nu +\psi=0  &\text{on } \Gamma \times I,
\end{cases}
\eeq 
where $\rho^0(x)$ and $\psi(x,t)$ are given initial and boundary data, respectively.

The variational formulation of \eqref{rho:eq} is defined as the following: 

Find $\rho :[0,T] \rightarrow  W \equiv H^1(\Omega)  $ such that 
\beq\label{weakform}
(\rho_t, w) + (K(|\nabla \rho |)\nabla \rho,\nabla w) =-\langle \psi, w\rangle + (f, w),  \quad w\in W
\eeq 
with $\rho(x,0)=\rho^0(x).$ 

\vspace{0.2cm}
Let  $\{\mathcal T_h\}_h$ be a family of quasi-uniform triangulations of $\Omega$ with $h$ being the maximum diameter of the element. Let $W_h$ the space of discontinuous piecewise polynomials of degree $r\ge 0$ over  $\mathcal T_h$.     
It is frequently valuable to decompose the analysis of the convergence of finite element methods by passing through a projection of the solution of the differential problem into the finite element space. Here we use the standard $L^2$-projection operator  (see \cite{Ciarlet78})  
$\pi: W \rightarrow W_h$,   satisfying
\begin{align*}
( \pi w ,   v_h ) = ( w ,   v_h ), \quad &\forall w\in W, v_h \in W_h.
\end{align*}
These projections have well-known approximation properties (see~\cite{BF91,JT81}). 

(i) $\norm{\pi w}\le \norm{w}$ holds for all $w\in L^2(\Omega)$.
   
(ii) There exist positive constants $C_1, C_2$ such that
\begin{equation}
\label{prjpi}
\begin{split}
\norm{\pi w - w }_{L^q(\Omega)} \leq C_1 h^m \norm{w}_{W^{m,q}(\Omega)} 
\end{split}
\end{equation}
for all $w \in W^{m,q}(\Omega)$,  $0\le m \le r+1, 1\le q \le \infty$. Notation $\norm {\cdot}_{m,q}$ denotes a standard norm in Sobolev space $W^{m,q}(\Omega)$. In short hand, when $q=2$ we write \eqref{prjpi} as   
\beqs
\norm{\pi w - w } \leq C_1 h^m \norm{w}_{m} \quad  \text { and }\quad \norm{ \pi w - w }_{L^\infty(\Omega)} \leq C_2 h^m \norm{w}_{m+1}. 
\eeqs

 Replacing the original density by its approximation, the semidiscrete formulation of~\eqref{weakform} can read as following: Find $\rho_h:[0,T] \rightarrow W_h$ such that
\beq\label{semidiscreteform}
  (\rho_{h,t},w_h)+(K(|\nabla \rho_h|)\nabla \rho_h,\nabla w_h) =-\langle \psi, w_h\rangle + (f, w_h),  \quad w_h\in W_h
\eeq 
with initial data $\rho_h^0=\pi \rho^0(x)$.

Let $N$ be the positive integer, $t_0=0 < t_1 <\ldots < t_N= T$ be partition interval $[0,T]$ of $N$ sub-intervals, and let $\Delta t = t_{n} - t_{n-1}=T/N$  be the $n$-th time step size, $t_n=n\Delta t$ and $\varphi^n = \varphi(\cdot, t_n)$. 
The discrete time Galerkin finite element approximation to \eqref{weakform} is defined as follows:  

Find $ \rho_h^n\in W_h$, $n=1,2,\dots, N$, such that 
\beq\label{fullydiscreteform}
\Big( \frac{ \rho_h^n - \rho_h^{n-1}}{\Delta t }, w_h\Big) +  \big(K(|\nabla \rho_h^n|)\nabla \rho_h^n, \nabla w_h\big) =-\langle \psi^n, w_h\rangle  +(f^n, w_h ), \quad \forall w_h\in W_h, 
\eeq
with initial data are chosen as follows:
$
\rho_h^0(x)=\pi \rho^0(x). 
$

\section{A priori estimate for solutions}\label{Bsec}
We study the \eqref{weakform}, \eqref{semidiscreteform} and \eqref{fullydiscreteform} equations for the density with fixed the functions $g(s)$ in \eqref{eq1} and \eqref{eq2}. 
Therefore, the exponents $\alpha_i$ and coefficients $a_i$ are all fixed, and so are the function $K(\xi)$, $H(\xi)$  in \eqref{Kdef}, \eqref{Hdef}.  

With properties \eqref{i:ineq1}, \eqref{i:ineq2}, \eqref{i:ineq3}, the monotonicity \eqref{Qineq}, and by classical theory of monotone operators \cite{MR0259693,s97,z90}, the authors in \cite {HIKS1} proved the global existence and uniqueness of weak solution of equation \eqref{weakform}. Moreover $\rho\in C(0,T,L^q(\Omega))\cap L_{loc}^{\beta}(0,T,W^{1,\beta}(\Omega))$, $q\ge 1$ and  $\rho_t \in L^{\beta'}_{loc}\bigl(0,T, (W^{1,\beta}(\Omega))'\bigr)\cap  L^2_{loc}\bigl(0,T, L^2(\Omega)\bigr)$  provided the initial, boundary data and $f$  sufficiently smooth. For  a { \it priori } estimate, we assume throughout this  paper  that $\psi, \psi_t$ belong to $C([0,T], L^\infty(\Gamma)),$ $\rho^0\in L^\infty(\Omega)$ and $f\in C([0,T], L^\infty(\Omega)) $ and the weak solution are sufficiently regularities both in $x$ and $t$ variables such that our calculations can be performed legitimately. 

The local existence of of approximate solution of \eqref{semidiscreteform} follows from Peano's theorem. The global existence of $\rho_h$ relies on the known theory of ordinary differential equation and stability estimate which shows as follows.

\begin{theorem}\label{bound-lq}
Let $q \ge 2$ and $\rho_h$ be a solution of \eqref{semidiscreteform}. Then there exist positive constant $C$ such that
\beq\label{res1a}
\norm{\rho_h}_{L^\infty(I,L^q(\Omega))} \le C\norm{\rho^0}_{L^q(\Omega)} +C\left[\int_0^T \Big(1+\norm{\psi}_{L^\infty(\Gamma)}^{\frac{q\lambda}{2} } + \norm{f}_{L^q(\Omega)}^q\Big) dt\right]^{\frac 1 q}.
\eeq
\end{theorem}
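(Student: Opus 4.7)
The plan is a standard $L^q$-energy estimate for the semidiscrete equation, tested against a power of $\rho_h$, followed by Gronwall. Concretely, I would use the test function $w_h = |\rho_h|^{q-2}\rho_h$ (or its $L^2$-projection $\pi(|\rho_h|^{q-2}\rho_h)$, see below) in \eqref{semidiscreteform}. Since $\rho_{h,t} \in W_h$, the time-derivative pairing collapses to $\frac{1}{q}\frac{d}{dt}\norm{\rho_h}_{L^q(\Omega)}^q$. The diffusion term becomes
\[
(q-1)\int_\Omega K(|\nabla \rho_h|)|\nabla \rho_h|^2 |\rho_h|^{q-2}\,dx
\geq c_3(q-1)\int_\Omega |\rho_h|^{q-2}\bigl(|\nabla \rho_h|^{\beta} - 1\bigr)\,dx,
\]
by the degeneracy estimate \eqref{i:ineq2} with $n=2$ (recall $\beta=2-a$).

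For the boundary term I would apply the trace-type estimate \eqref{sec2} from Lemma \ref{traceest}(i) with $v=\rho_h$, giving
\[
|\langle\psi,|\rho_h|^{q-2}\rho_h\rangle|
\le \norm{\psi}_{L^\infty(\Gamma)}\!\left[C\norm{\rho_h}_{L^q}^{q-1} + \varep\!\int_\Omega |\rho_h|^{q-2}|\nabla \rho_h|^{\beta}dx + C\varep^{-\frac 1{\beta-1}}\norm{\rho_h}_{L^q}^{q-2}\right].
\]
Choosing $\varep = c_3(q-1)/(2\norm{\psi}_{L^\infty(\Gamma)})$ absorbs the gradient contribution into the diffusion term; the residual boundary contribution collapses to terms like $C\norm{\psi}_{L^\infty(\Gamma)}\norm{\rho_h}_{L^q}^{q-1}$ and $C\norm{\psi}_{L^\infty(\Gamma)}^{\lambda}\norm{\rho_h}_{L^q}^{q-2}$, using the identity $1+\tfrac{1}{\beta-1}=\lambda$. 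The source term is bounded by $\norm{f}_{L^q}\norm{\rho_h}_{L^q}^{q-1}$.

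Next, I would apply Young's inequality with carefully chosen exponents: using conjugates $(q,\,q/(q-1))$ on the linear-in-$\psi$ and source terms, and the crucial conjugates $(q/2,\,q/(q-2))$ on the $\lambda$-in-$\psi$ term, which produces exactly $C\norm{\psi}_{L^\infty(\Gamma)}^{q\lambda/2}$, matching the exponent in the statement. (Since $\lambda\ge 2$, the $\norm{\psi}^q$ residue is dominated by $1+\norm{\psi}^{q\lambda/2}$.) The lower-order term $(q-1)c_3\int|\rho_h|^{q-2}dx$ is handled similarly. These manipulations yield a differential inequality
\[
\frac{d}{dt}\norm{\rho_h}_{L^q}^q \le C\norm{\rho_h}_{L^q}^q + Cq\bigl(1+\norm{\psi}_{L^\infty(\Gamma)}^{q\lambda/2}+\norm{f}_{L^q}^q\bigr).
\]
Integrating in time, invoking Gronwall's lemma, taking the $q$-th root, and using $L^q$-stability $\norm{\pi\rho^0}_{L^q}\le C\norm{\rho^0}_{L^q}$ on the quasi-uniform mesh gives \eqref{res1a}. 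The same energy identity provides global existence of the ODE in $W_h$ beyond the local Peano time.

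The main technical obstacle is that $|\rho_h|^{q-2}\rho_h$ is not an element of $W_h$ for $q>2$, so one must test against $w_h=\pi(|\rho_h|^{q-2}\rho_h)$ and control the projection error in the diffusion term---the time-derivative and source terms are unchanged because $\rho_{h,t}\in W_h$, but reconciling $(K(|\nabla\rho_h|)\nabla\rho_h,\nabla\pi(|\rho_h|^{q-2}\rho_h))$ with the desired $(q-1)\int K|\nabla\rho_h|^2|\rho_h|^{q-2}dx$ requires the $W^{1,\beta}$-stability / approximation properties of $\pi$ together with the Lipschitz bound \eqref{Kcont}. The case $q=2$ is clean since then $w_h=\rho_h\in W_h$ directly and the above plan goes through verbatim.
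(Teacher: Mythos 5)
Your proposal follows essentially the same route as the paper: the paper tests \eqref{semidiscreteform} with $w_h=|\rho_h|^{q-1}\sign(\rho_h)$, applies \eqref{i:ineq2} to the diffusion term, uses Lemma \ref{traceest}(i) with the choice $\varep\sim \norm{\psi}_{L^\infty(\Gamma)}^{-1}$ to absorb the gradient contribution, invokes Young's inequality to produce the exponent $q\lambda/2$, and closes with Gronwall and the projection bound on the initial data. The only difference is that you flag the admissibility of the nonlinear test function in $W_h$ as an obstacle, whereas the paper simply inserts it without comment; your observation is if anything more careful than the source.
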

\begin{proof}
 Selecting $w_h= |\rho_h|^{q-1} \sign (\rho_h) $ in \eqref{semidiscreteform}, we obtain
\beq\label{dpalpha}
 \begin{split}
\frac{1}{q}\frac{d }{dt}\norm{\rho_h}_{L^q}^q &=-(q-1)\intd{ K(|\nabla \rho_h|)|\nabla \rho_h|^2, |\rho_h|^{q-2}}
\\
& -\intb{\psi, |\rho_h|^{q-1} \sign (\rho_h)} + \intd{f, |\rho_h|^{q-1} \sign (\rho_h)}.
\end{split}
\eeq
It follows from \eqref{i:ineq2} and H\"older's inequality that
\beq\label{term1}
\begin{aligned}
-(q-1)\intd{ K(|\nabla \rho_h|)|\nabla \rho_h|^2, |\rho_h|^{q-2}}&\le -c_0\int_\Omega ( |\nabla \rho_h|^{\beta}-1)|\rho_h|^{q-2}dx\\
&= -c_0 \int_\Omega |\nabla \rho_h|^{\beta} |\rho_h|^{q-2}  dx+c_0\int_\Omega |\rho_h|^{q-2}dx\\ 
& \le -c_0 \int_\Omega |\nabla \rho_h|^{\beta}|\rho_h|^{q-2}  dx+C\norm{\rho_h}_{L^q}^{q-2}, 
\end{aligned}
\eeq
where $c_0 =c_3(q-1).$\\
According to \eqref{sec2} in Lemma \ref{traceest}, 
\begin{multline}\label{term2}
\gabs{-\intb{\psi, |\rho_h|^{q-1} \sign (\rho_h)} }
\le \norm{\psi}_{L^\infty(\Gamma)}\int_\Gamma |\rho_h|^{q-1}d\sigma\\
\le C\norm{\psi}_{L^\infty(\Gamma)}\Big\{  \norm{\rho_h}_{L^q}^{q-1} + \varep\int_\Omega |\rho_h|^{q-2}|\nabla \rho_h|^{\beta} dx +\varep^{-\frac 1{\beta-1}}\norm{\rho_h}_{L^q}^{q-2}   \Big\}.
\end{multline}
Using Young's inequalities with exponent $q/(q-1)$ and  $q$, we obtain
\beq\label{term3}
\begin{aligned}
 \intd{f, |\rho_h|^{q-1} \sign (\rho_h)} \le C\norm{\rho_h}_{L^q}^{q-1} +C\norm{f}_{L^q}^q.
\end{aligned}
\eeq

Combining \eqref{term1}, \eqref{term2}, \eqref{term3} and \eqref{dpalpha} gives
\begin{multline}\label{diff-neq}
 \frac 1 q \frac{d }{dt}\norm{\rho_h}_{L^q}^q\le - c_0 \int_\Omega |\nabla \rho_h|^{\beta} |\rho_h|^{q-2}  dx +C\norm{\rho_h}_{L^q}^{q-2}\\
+C\norm{\psi}_{L^\infty(\Gamma)}\Big\{\norm{\rho_h}_{L^q}^{q-1} + \varep\int_\Omega |\rho_h|^{q-2}|\nabla \rho_h|^{\beta} dx +\varep^{-\frac 1{\beta-1}}\norm{\rho_h}_{L^q}^{q-2}\Big\}\\
+C\norm{\rho_h}_{L^q}^{q-1} +C\norm{f}_{L^q}^q.
\end{multline}
The case $\norm{\psi(t)}_{L^\infty(\Gamma)} =0$, by the means of Young's inequality in \eqref{diff-neq} we obtain
\beq\label{}
 \frac{d }{dt}\norm{\rho_h}_{L^q}^q+  qc_0 \int_\Omega |\nabla \rho_h|^{\beta} |\rho_h|^{q-2}  dx \le C\norm{\rho_h}_{L^q}^q +C(1+\norm{f}_{L^q}^q).
\eeq
 Applying Gronwall's inequality to above differential inequality implies that.
  \beq\label{re1}
  \begin{split}
\norm{\rho_h}_{L^\infty(I,L^q(\Omega))}^q &+qc_0\int_0^t\int_\Omega e^{C(t-\tau)} |\nabla \rho_h|^{\beta} |\rho_h|^{q-2}  dxd\tau \\
 &\qquad \qquad\le C\norm{\rho_{h}^0}_{L^q(\Omega)}^q +C\int_0^T \Big(1+ \norm{f}_{L^q(\Omega)}^q\Big) dt.
\end{split}
\eeq
 
Consider $\norm{\psi(t)}_{L^\infty(\Gamma)} \neq 0$. Selecting $\varep=\frac{c_0}{2C}  \norm{\psi}_{L^\infty(\Gamma)}^{-1}$ in \eqref{diff-neq}, we obtain
\beq\label{r:est}
\begin{split}
\frac{d }{dt}\norm{\rho_h}_{L^q}^q &\le - \frac{qc_0} 2 \int_\Omega |\nabla \rho_h|^{\beta} |\rho_h|^{q-2}  dx +C\norm{\rho_h}_{L^q}^{q-2}\\
&+C\norm{\psi}_{L^\infty(\Gamma)}\Big\{\norm{\rho_h}_{L^q}^{q-1}   + \norm{\psi}_{L^\infty(\Gamma)}^{\frac 1{\beta-1}}\norm{\rho_h}_{L^q}^{q-2}\Big\} +C\norm{\rho_h}_{L^q}^{q-1} +C\norm{f}_{L^q}^q.
\end{split}
\eeq
By Young's inequality,  
\beq\label{rho:est}
\begin{split}
\frac{d }{dt}\norm{\rho_h}_{L^q}^q &+ \frac{qc_0} 2\int_\Omega |\nabla \rho_h|^{\beta} |\rho_h|^{q-2}  dx\\
&\le   C\norm{\rho_h}_{L^q}^q  + C\Big(1+\norm{\psi}_{L^\infty(\Gamma)}^q+\norm{\psi}_{L^\infty(\Gamma)}^{\frac{q \beta}{2(\beta-1)} } + \norm{f}_{L^q}^q\Big)\\
&\le   C\norm{\rho_h}_{L^q}^q  + C\Big(1+\norm{\psi}_{L^\infty(\Gamma)}^{\frac{q \beta}{2(\beta-1)} } + \norm{f}_{L^q}^q\Big).
\end{split}
\eeq
Solving this differential inequality shows that, 
\beq\label{res0}
\begin{split}
\norm{\rho_h}_{L^q}^q &+\frac{qc_0} 2 \int_0^t\int_\Omega e^{C(t-\tau)} |\nabla \rho_h|^{\beta} |\rho_h|^{q-2}  dxd\tau \\
&\le  \norm{\rho_{h}^0}_{L^q}^q+ C\int_0^t \Big(1+\norm{\psi}_{L^\infty(\Gamma)}^{\frac{q \beta}{2(\beta-1)} } + \norm{f}_{L^q}^q\Big) d\tau.
\end{split}
\eeq
It is easy to see that in both above cases
\beqs
\norm{\rho_h}_{L^q}^q \le  \norm{\rho_{h}^0}_{L^q}^q+ C\int_0^t \Big(1+\norm{\psi}_{L^\infty(\Gamma)}^{\frac{q \beta}{2(\beta-1)} } + \norm{f}_{L^q}^q\Big) d\tau.
\eeqs
Using inequality $ (a+b)^{1/q}\le 2^{1/q}(a^{1/q}+ b^{1/q})$ with 
$$a= \norm{\rho_{h}^0}_{L^q}^q \quad \text{  and } \quad  b= \int_0^t \Big(1+\norm{\psi}_{L^\infty(\Gamma)}^{\frac{q \beta}{2(\beta-1)} } + \norm{f}_{L^q}^q\Big) d\tau,$$  
we obtain 
\beq\label{res1}
\norm{\rho_h}_{L^\infty(I,L^q(\Omega))} \le C\norm{\rho_{h}^0}_{L^q(\Omega)} +C\left[\int_0^T \Big(1+\norm{\psi}_{L^\infty(\Gamma)}^{\frac{q\lambda}{2} } + \norm{f}_{L^q(\Omega)}^q\Big) dt\right]^{\frac 1 q}.
\eeq
Note that  
\beq\label{aaa}
\norm{\rho^0_h}= \norm{\pi\rho^0}\le \norm{\rho^0}.
\eeq 
Thus inequality \eqref{res1a} holds. We finish the proof.  
\end{proof}
 
Next we give estimates in $L^2$-norm following directly from \eqref{res0}  and \eqref{aaa} with $q=2$.   
\begin{lemma} Under assumption of Theorem~\ref{bound-lq}. There exist a positive constant $C$ such that 
\beq\label{resl2}
\begin{aligned}
\norm{\rho_h}_{L^\infty(I,L^2(\Omega))}&+\norm{\nabla \rho_h }_{L^\beta(I,L^\beta(\Omega)) }^{\frac{\beta}2}\\
&\le C\norm{\rho^0}+C\left[\int_0^T \Big(1+\norm{\psi(t)}_{L^\infty(\Gamma)}^{\lambda } + \norm{f(t)}^2\Big) dt\right]^{\frac 12}.
\end{aligned}
\eeq
\end{lemma}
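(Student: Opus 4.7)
The plan is to specialize estimate \eqref{res0} to $q=2$ and simplify. Two pleasant coincidences at $q=2$ make the statement fall out immediately: the weight $|\rho_h|^{q-2}$ on the gradient term collapses to $1$, so that the integrand becomes $|\nabla \rho_h|^{\beta}$ unweighted; and the exponent on $\norm{\psi}_{L^\infty(\Gamma)}$ simplifies via
\begin{equation*}
\frac{q\beta}{2(\beta-1)}\Big|_{q=2}=\frac{\beta}{\beta-1}=\lambda,
\end{equation*}
by the definitions in \eqref{MLGdef}. Inserting these into \eqref{res0} yields
\begin{equation*}
\norm{\rho_h(t)}^{2}+c_0\int_0^t\int_\Omega e^{C(t-\tau)}|\nabla \rho_h|^{\beta}\,dx\,d\tau \le \norm{\rho_h^0}^{2}+C\int_0^t\Big(1+\norm{\psi(\tau)}_{L^\infty(\Gamma)}^{\lambda}+\norm{f(\tau)}^{2}\Big)d\tau.
\end{equation*}

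Next, since $t-\tau\ge 0$ and the constant in the Gronwall exponent is nonnegative, the factor $e^{C(t-\tau)}\ge 1$ can be dropped on the left-hand side, giving
\begin{equation*}
\norm{\rho_h(t)}^{2}+c_0\int_0^t\norm{\nabla\rho_h(\tau)}_{L^\beta(\Omega)}^{\beta}\,d\tau \le \norm{\rho_h^0}^{2}+C\int_0^t\Big(1+\norm{\psi(\tau)}_{L^\infty(\Gamma)}^{\lambda}+\norm{f(\tau)}^{2}\Big)d\tau.
\end{equation*}
Taking the supremum over $t\in I$, applying the elementary inequality $(a+b)^{1/2}\le a^{1/2}+b^{1/2}$ to split the left-hand side, and absorbing $c_0$ into the generic constant $C$, produces
\begin{equation*}
\norm{\rho_h}_{L^\infty(I,L^2(\Omega))}+\norm{\nabla\rho_h}_{L^\beta(I,L^\beta(\Omega))}^{\beta/2}\le C\norm{\rho_h^0}+C\left[\int_0^T\Big(1+\norm{\psi}_{L^\infty(\Gamma)}^{\lambda}+\norm{f}^{2}\Big)d t\right]^{1/2}.
\end{equation*}
Finally, the bound $\norm{\rho_h^0}=\norm{\pi\rho^0}\le\norm{\rho^0}$ from \eqref{aaa} replaces the discrete initial data norm by the continuous one, which is exactly \eqref{resl2}.

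There is essentially no obstacle here; the only care required is verifying the two algebraic simplifications ($|\rho_h|^{q-2}\equiv 1$ and the exponent collapsing to $\lambda$) at $q=2$, and properly splitting the square root across the sum on the left-hand side. Because \eqref{res0} already packages the full Gronwall argument, the present lemma is genuinely a corollary rather than a new computation.
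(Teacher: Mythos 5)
Your proposal is correct and follows exactly the paper's route: the paper itself states that this lemma ``follows directly from \eqref{res0} and \eqref{aaa} with $q=2$,'' and you have simply filled in the two algebraic simplifications at $q=2$, the dropping of the factor $e^{C(t-\tau)}\ge 1$, and the square-root splitting that the paper leaves implicit. No issues.
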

Since the equation \eqref{semidiscreteform} can interpret as the finite system of ordinary differential equations in the coefficients of $\rho_h$ with respect to basis of $W_h$. The stability estimate \eqref{res1} suffices to establish existence of $\rho_h(t)$ for all $t\in I.$   

For the uniqueness of approximation solution, assume that for $i=1,2$, $\rho_h^{(i)} \in W_h$ is the solution of \eqref{semidiscreteform}. Let $\rho_h =\rho_h^{(1)} -\rho_h^{(2)}$ then 
\begin{align*}
(\rho_{h,t}, w_h)+(K(|\nabla \rho_h^{(1)}|)\nabla \rho_h^{(1)}- K(|\nabla \rho_h^{(2)}|)\nabla \rho_h^{(2)},\nabla w_h)=0.
\end{align*}
Choose $w_h = \rho_h,$ implies 
  \beqs
\frac 12 \frac d{dt} \norm{\rho_h}^2+ \intd{K(|\nabla \rho_h^{(1)}|)\nabla \rho_h^{(1)}- K(|\nabla \rho_h^{(2)}|)\nabla \rho_h^{(2)},\nabla  \rho_h^{(1)} - \nabla\rho_h^{(2)}}  =0.
\eeqs
According to \eqref{Mono} we find that 
\begin{multline*}
\intd{K(|\nabla \rho_h^{(1)}|)\nabla \rho_h^{(1)}- K(|\nabla \rho_h^{(2)}|)\nabla \rho_h^{(2)},\nabla  \rho_h^{(1)} - \nabla\rho_h^{(2)} }\\
\ge C\left[1+ \max\{\|\nabla \rho_h^{(1)}\|_{L^{\beta}} ,  \|\nabla \rho_h^{(2)}\|_{L^{\beta}} \}\right]^{-a}\norm{\nabla \rho_h}_{L^{\beta}}^2.
\end{multline*}

Hence 
  \beqs
\frac 12 \frac d{dt} \norm{\rho_h}^2+ C\left[1+ \max\{\|\nabla \rho_h^{(1)}\|_{L^{\beta}} ,  \|\nabla \rho_h^{(2)}\|_{L^{\beta}} \}\right]^{-a}\norm{\nabla \rho_h}_{L^{\beta}}^2  \le 0.
\eeqs
Integrating in time from $0$ to $t$ we have 
\beqs
\norm{\rho_h}^2+ C\int_0^t\left[(1+ \max\{\|\nabla \rho_h^{(1)}\|_{L^{\beta}} ,  \|\nabla \rho_h^{(2)}\|_{L^{\beta}} \}\right]^{-a}\norm{\nabla \rho_h}_{L^{\beta}}^2 dt \le \norm{\rho_h(0)}^2=0.
\eeqs
Thus 
$$
\rho_h=0, \quad \nabla \rho_h = 0  \qquad \forall (x,t)\in\Omega\times I.
$$

Next, we find estimates for $\nabla \rho_h$. We define 
\beq\label{ghkdef} 
g(t) = 1+ \norm{\psi(t)}_{L^\infty(\Gamma)}^\lambda,\quad h(t) = 1+ \norm{\psi_t(t)}_{L^\infty(\Gamma)}^\lambda,  \quad k(t) = g(t)+ \norm{f(t)}^2. 
\eeq
 
\begin{theorem}\label{gradB}  Let $\rho_h$ be a solution to the problem \eqref{semidiscreteform}. Then there exist constant positive constant $C$ satisfying  
\beq\label{mid0}
\norm{ \rho_{h,t}}_{L^2(I,L^2(\Omega))}^2 + \norm{\nabla \rho_h}_{L^{\beta}(I,L^{\beta}(\Omega))}^{\beta}+ \norm{\nabla \rho_h}_{L^\infty(I,L^{\beta}(\Omega))}^{\beta}  +\norm{\rho_h}_{L^\infty(I,L^2(\Omega))}^2\le C \Mm.
\eeq
where 
\beq\label{Mdef}
\begin{split}
\Mm =  \max_{t\in [0,T]} g(t)&+\int_0^T [h(t)+k(t)] dt+ \int_0^T \int_0^t k(\tau) d\tau dt \\
&+\norm{\rho^0}^2+ \norm{\nabla \rho^0}_{L^{\beta}(\Omega)}^{\beta} +\norm{\psi^0}_{L^2(\Gamma)}\norm{\rho^0}_{L^2(\Gamma)}.
\end{split}
\eeq
\end{theorem}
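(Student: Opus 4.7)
The natural test function is $w_h=\rho_{h,t}$, which will produce the $L^2L^2$ norm of $\rho_{h,t}$ and, via the potential $H$ from \eqref{Hdef}, the $L^\infty L^\beta$ norm of $\nabla\rho_h$. Observe that since $H'(\xi)=2\xi K(\xi)$, the chain rule gives $\ddt \frac12\int_\Omega H(|\nabla\rho_h|)\,dx = (K(|\nabla\rho_h|)\nabla\rho_h,\nabla\rho_{h,t})$, so plugging $w_h=\rho_{h,t}$ into \eqref{semidiscreteform} produces the identity
\beqs
\norm{\rho_{h,t}}^2 + \frac 12 \ddt \int_\Omega H(|\nabla\rho_h|)\,dx = -\intb{\psi,\rho_{h,t}} + (f,\rho_{h,t}).
\eeqs
The $(f,\rho_{h,t})$ term is harmless by Young's inequality. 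The boundary term is the principal difficulty: $\rho_{h,t}$ has no natural trace bound, so it cannot be controlled pointwise in time. The remedy is to integrate in time first and then use integration by parts in time on the boundary integral, $\int_0^t\intb{\psi,\rho_{h,\tau}}\,d\tau = \intb{\psi(t),\rho_h(t)}-\intb{\psi^0,\rho_h^0}-\int_0^t\intb{\psi_\tau,\rho_h}\,d\tau$.

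After integrating in $t$ I would obtain
\beqs
\int_0^t\norm{\rho_{h,\tau}}^2d\tau + \frac12\int_\Omega H(|\nabla\rho_h(t)|)\,dx \le \frac12\int_\Omega H(|\nabla\rho^0|)\,dx + \text{(boundary)} + \text{(source)},
\eeqs
and then apply the trace estimate \eqref{bnd-est} of Lemma \ref{traceest} separately to $\intb{\psi(t),\rho_h(t)}$, to $\intb{\psi^0,\rho_h^0}$, and to $\int_0^t\intb{\psi_\tau,\rho_h}d\tau$. Each application produces a term of the form $\tfrac14(\norm{\rho_h}^2+\norm{\nabla\rho_h}_{L^\beta}^\beta)$ plus a constant times $1+\norm{\psi}_{L^\infty(\Gamma)}^\lambda = g(t)$ (or $h(t)$ for the $\psi_\tau$ term). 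The key absorption step uses \eqref{i:ineq4} together with \eqref{i:ineq2} for $n=2$ to dominate $\norm{\nabla\rho_h(t)}_{L^\beta(\Omega)}^\beta$, up to an additive constant, by $\int_\Omega H(|\nabla\rho_h(t)|)\,dx$ on the left-hand side; after choosing the $\varepsilon$ in \eqref{bnd-eps} small, the $\nabla\rho_h$ contributions coming out of the trace inequality are swallowed.

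What remains is to control $\int_0^t\norm{\rho_h(\tau)}^2d\tau$ (which arose both from the time-integrated boundary term and from Young's inequality on $(f,\rho_{h,t})$) and $\norm{\rho_h(t)}^2$ itself. For these I would invoke Theorem \ref{bound-lq} with $q=2$ (equivalently \eqref{resl2}), which yields $\norm{\rho_h(t)}^2\le C(\norm{\rho^0}^2+\int_0^t k(\tau)d\tau)$; integrating again in $t$ contributes the iterated integral $\int_0^T\int_0^t k(\tau)d\tau\,dt$ appearing in $\Mm$. The initial-data terms in \eqref{Mdef} then arise as: $\norm{\rho^0}^2$ and $\norm{\nabla\rho^0}_{L^\beta}^\beta$ from $\int_\Omega H(|\nabla\rho^0|)dx \le 2\int_\Omega K(|\nabla\rho^0|)|\nabla\rho^0|^2 dx \le C\norm{\nabla\rho^0}_{L^\beta}^\beta$ by \eqref{i:ineq4} and \eqref{i:ineq2}, along with the stability of the $L^2$-projection \eqref{aaa}; and $\norm{\psi^0}_{L^2(\Gamma)}\norm{\rho^0}_{L^2(\Gamma)}$ from a direct Cauchy--Schwarz bound on $\intb{\psi^0,\rho_h^0}$ using $\norm{\pi\rho^0}_{L^2(\Gamma)}\le C\norm{\rho^0}_{L^2(\Gamma)}$.

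The main obstacle is really a bookkeeping issue rather than a conceptual one: verifying that the various $\nabla\rho_h$ contributions produced by three separate applications of the trace inequality \eqref{bnd-est} can all be absorbed on the left-hand side once we exploit that $H(|\nabla\rho_h|)$ dominates $|\nabla\rho_h|^\beta$ up to a constant. Taking the supremum in $t\in[0,T]$ at the end then gives simultaneously the $L^\infty(I,L^\beta)$ bound on $\nabla\rho_h$ and the $L^2(I,L^2)$ bound on $\rho_{h,t}$, while the $L^\beta(I,L^\beta)$ bound on $\nabla\rho_h$ and the $L^\infty(I,L^2)$ bound on $\rho_h$ are already contained in \eqref{resl2}, completing \eqref{mid0}.
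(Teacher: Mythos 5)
Your proposal is correct and follows essentially the same route as the paper: test with $w_h=\rho_{h,t}$, use the potential $H$ from \eqref{Hdef}, move the time derivative off $\rho_{h,t}$ in the boundary term (your integration by parts in time is exactly the paper's identity $-\langle\psi,\rho_{h,t}\rangle=-\frac{d}{dt}\langle\psi,\rho_h\rangle+\langle\psi_t,\rho_h\rangle$), control the boundary contributions with Lemma \ref{traceest}, absorb the gradient terms via $\int_\Omega H\gtrsim\norm{\nabla\rho_h}_{L^\beta}^\beta-C$ from \eqref{i:ineq4} and \eqref{i:ineq2}, and close with the $q=2$ stability estimate. The only difference is bookkeeping: the paper adds the $q=2$ differential inequality \eqref{rho-est} to the energy identity before integrating (forming the functional $\mathcal E(t)$), while you integrate first and quote \eqref{resl2} a posteriori for the $L^\infty(L^2)$ and $L^\beta(L^\beta)$ pieces — the two are equivalent.
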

\begin{proof}
Choosing $w_h=\rho_{h,t} $ in \eqref{semidiscreteform} leads to 
\beq\label{Diffineq1}
\begin{split}
\norm{ \rho_{h,t}}^2 +\frac 1 2 \ddt \int_\Omega H(x,t) dx  
&= -\intb{ \psi, \rho_{h,t}  } + (f, \rho_{h,t})\\
&=-\ddt \intb{\psi, \rho_h }+ \intb{ \psi_t, \rho_h }+ (f, \rho_{h,t}).
\end{split}
\eeq
where $H(x,t)=H(|\nabla \rho_h(x,t)| )$ is defined in \eqref{Hdef}.
 
With $q=2$ then from \eqref{rho:est},
\beq\label{rho-est}
\ddt\norm{\rho_h}^2 +c_0\norm{\nabla \rho_h}_{L^{\beta}}^{\beta}\le   C\norm{\rho_h}^2  + C\Big(1+\norm{\psi}_{L^\infty(\Gamma)}^\lambda+ \norm{f}^2\Big).
\eeq
Let
\beqs
\mathcal E(t) = \int_\Omega H(x,t) dx +\norm{\rho_h}^2+2\intb{\psi, \rho_h }.
\eeqs
Adding two inequalities \eqref{Diffineq1} and \eqref{rho-est} gives  
 \beqs
\begin{aligned}
\norm{ \rho_{h,t}}^2  +c_0\norm{\nabla \rho_h}_{L^{\beta}}^{\beta}+ \frac 12\ddt \mathcal E(t) \le \langle \psi_t, \rho_h\rangle+ (f, \rho_{h,t})+ C\norm{\rho_h}^2 + C k(t) .
\end{aligned}
\eeqs
 Using \eqref{bnd-est} and Cauchy's inequality then
  \begin{multline*}
\norm{ \rho_{h,t}}^2 +c_0 \norm{\nabla \rho_h}_{L^{\beta}}^{\beta}+ \frac 12 \ddt \mathcal E(t) \le \frac 12 \left(\norm{\rho_h}^2+ \norm{\nabla \rho_h}_{L^{\beta}}^{\beta}\right)
+ C\left(1+ \norm{\psi_t}_{L^\infty(\Gamma)}^{^\lambda}  \right) \\
+\frac 12 \norm{f}^2 +\frac 12\norm{\rho_{h,t}}^2 + C\norm{\rho_h}^2 + Ck(t) .
\end{multline*}
Absorbing $\norm {f}^2$ to $k(t)$, integrating in time, we obtain
  \beq\label{Diffineq3}
\begin{aligned}
\int_0^T \norm{ \rho_{h,t}}^2 dt &+c_0 \int_0^T \norm{\nabla \rho_h}_{L^{\beta}}^{\beta} dt+\int_\Omega H(x,t) dx +\norm{\rho_h}^2
\le -2\langle \psi, \rho_h\rangle\\
 &+C\int_0^T \norm{\rho_h}^2 dt +C\int_0^T  \left[h(t)  + k(t) \right] dt+ \mathcal E(0).
\end{aligned}
\eeq
Applying \eqref{bnd-eps} to the first term of the left hand side in \eqref{Diffineq3} and using the fact in \eqref{i:ineq4} that  $c_3 (|\nabla\rho_h|^{\beta}-1) \le H(x,t)   \le 2c_2|\nabla\rho_h|^{\beta}$,  we have   
 \beq\label{Diffineq4}
\begin{aligned}
&\int_0^T \norm{ \rho_{h,t}}^2 dt +c_0 \int_0^T\norm{\nabla \rho_h}_{L^{\beta}}^{\beta} dt+  c_3\norm{\nabla \rho_h}_{L^{\beta}}^{\beta} +\norm{\rho_h}^2\\
& \qquad\le 2\varep\left( \norm{\rho_h}^2+ \norm{\nabla \rho_h}_{L^{\beta}}^{\beta}\right)
+ C\left(\varep^{-1}\norm{\psi}_{L^\infty(\Gamma)}+ \varep^{-\frac 1{\beta-1}} \norm{\psi}_{L^\infty(\Gamma)}^{^\lambda}  \right)\\
 &\quad\qquad +C\int_0^T \norm{\rho_h}^2 dt +C\int_0^T  \left[h(t)  + k(t) \right] dt+ \mathcal E(0).
\end{aligned}
\eeq
Thus by taking $\varep =\min\{c_3,1\}/4$, and using Young's inequality, 
 \beq\label{ineq4}
\begin{aligned}
\int_0^T \norm{ \rho_{h,t}}^2 dt &+c_0 \int_0^T \norm{\nabla \rho_h}_{L^{\beta}}^{\beta} dt+ \frac{c_3}2 \norm{\nabla \rho_h}_{L^{\beta}}^{\beta} +\frac 12 \norm{\rho_h}^2 \le C g(t)\\
 &+C\int_0^T \norm{\rho_h}^2 dt +C\int_0^T  \left[h(t)  + k(t) \right] dt+ \mathcal E(0).
\end{aligned}
\eeq
Note from \eqref{res0} with $q=2$ that
\begin{align*}
\int_0^T\norm{\rho_h}^2 dt &\le C\left(T\norm{\rho_h(0)}^2 +\int_0^T \int_0^t k(\tau) d\tau dt \right)\\
&\le C\left(T\norm{\rho^0}^2 +\int_0^T \int_0^t k(\tau) d\tau dt \right)
\end{align*}
and from \eqref{i:ineq4}, \eqref{i:ineq2} that 
\begin{align*}
 \mathcal E(0) \le C\left(\norm{\rho_h^0}^2+ \norm{\nabla \rho_h^0}_{L^{\beta}(\Omega)}^{\beta} +\norm{\psi(0)}_{L^2(\Gamma)} \norm{\rho_h^0 }_{L^2(\Gamma)}\right)\\
 \le C\left(\norm{\rho^0}^2+ \norm{\nabla \rho^0}_{L^{\beta}(\Omega)}^{\beta} +\norm{\psi(0)}_{L^2(\Gamma)} \norm{\rho^0 }_{L^2(\Gamma)}\right).
\end{align*}
The left hand side of \eqref{ineq4} is bounded by $C\Mm$  which implies  \eqref{mid0}.
\end{proof}

Now we prove that the time derivative of density is also bounded. 
\begin{theorem}\label{rhderv} Let $0<t_0<T,$ $\rho_h$ be a solution to the semidiscrete problem \eqref{semidiscreteform}. Then there exist constant positive constant $C$ such that        
\beq\label{mid1}
\norm{\rho_{h,t}}_{L^\infty(I,L^2(\Omega)  )}^2 \le  Ct_0^{-1}\Mm +C\Mm 
 +C\int_0^T h(t)\left(1+\norm{f_t(t)}^2\right)dt.
  \eeq
  where $\Mm$ and $h(t)$ are defined in \eqref{Mdef} and \eqref{ghkdef} respectively. 
\end{theorem}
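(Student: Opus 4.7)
The strategy is to differentiate \eqref{semidiscreteform} in time, test the resulting equation against $\rho_{h,t}$, and then apply Lions' time-weighting trick (multiplication by $t$) to compensate for the absence of an a~priori bound on $\|\rho_{h,t}(0)\|$; this is precisely what forces the singular factor $t_0^{-1}$ in \eqref{mid1}, since after dividing through by $t$ the resulting estimate only holds for $t\ge t_0$.

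After differentiating and choosing $w_h=\rho_{h,t}$, the key nonlinear integrand expands via the chain rule to
\begin{equation*}
K(|\nabla\rho_h|)|\nabla\rho_{h,t}|^2 + K'(|\nabla\rho_h|)\frac{(\nabla\rho_h\cdot\nabla\rho_{h,t})^2}{|\nabla\rho_h|}.
\end{equation*}
Cauchy--Schwarz together with $K'\le 0$ and property \eqref{i:ineq3} gives the coercivity $(\beta-1)\int_\Omega K(|\nabla\rho_h|)|\nabla\rho_{h,t}|^2\,dx\ge 0$, the time-differentiated counterpart of \eqref{Qineq} (using $\beta-1=1-a$). On the right-hand side, $(f_t,\rho_{h,t})$ is handled by Young's inequality, and the boundary term is treated with Lemma \ref{traceest}(ii) applied to $(u,v)=(\psi_t,\rho_{h,t})$, contributing $\tfrac14\|\rho_{h,t}\|^2 + \tfrac14\|\nabla\rho_{h,t}\|_{L^\beta}^\beta + C(1+h(t))$.

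The delicate issue is absorbing $\|\nabla\rho_{h,t}\|_{L^\beta}^\beta$, since we have no a~priori bound on $\nabla\rho_{h,t}$. Exploiting $\beta=2-a$ and $K(\xi)\ge c_1(1+\xi)^{-a}$, H\"older's inequality with exponents $2/\beta$ and $2/a$ yields
\begin{equation*}
\|\nabla\rho_{h,t}\|_{L^\beta}^\beta \le C\Bigl(\int_\Omega K(|\nabla\rho_h|)|\nabla\rho_{h,t}|^2\,dx\Bigr)^{\beta/2}\Bigl(\int_\Omega(1+|\nabla\rho_h|)^\beta\,dx\Bigr)^{a/2},
\end{equation*}
after which Young's inequality absorbs the first factor into the coercive $(\beta-1)\int K|\nabla\rho_{h,t}|^2$ and the second factor is controlled by $\Mm$ via Theorem \ref{gradB}. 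This leaves a scalar differential inequality of the form $\tfrac{d}{dt}\|\rho_{h,t}\|^2 \le C\|\rho_{h,t}\|^2 + \Phi(t)$, where $\Phi$ is integrable in $t$ and controlled by $\Mm$ and $\int_0^T h(t)(1+\|f_t\|^2)\,dt$.

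To finish, I would multiply by $t$, rewrite $t\tfrac{d}{dt}\|\rho_{h,t}\|^2 = \tfrac{d}{dt}(t\|\rho_{h,t}\|^2) - \|\rho_{h,t}\|^2$, and integrate from $0$ to any $\tau\in(t_0,T]$. The parasitic $\int_0^\tau\|\rho_{h,t}\|^2\,dt$ is absorbed by Theorem \ref{gradB}, which already bounds it by $C\Mm$, and the remaining boundary/forcing contribution gives the $\int_0^T h(t)(1+\|f_t\|^2)\,dt$ term. Dividing by $t_0$ on the range $\tau\ge t_0$ and running a final Gronwall closure on the residual yields \eqref{mid1}. The principal obstacle I anticipate is the H\"older/Young absorption step above: the exponents $\beta,\lambda,a$ of \eqref{MLGdef} must conspire exactly with Theorem \ref{gradB}'s bound on $\nabla\rho_h$ for the argument to close without needing regularity of $\nabla\rho_{h,t}$ that we do not possess.
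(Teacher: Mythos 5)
Your proposal is correct and follows the same skeleton as the paper's proof: differentiate \eqref{semidiscreteform} in time, test with $\rho_{h,t}$, obtain the coercive term $(1-a)\|K^{1/2}(|\nabla\rho_h|)\nabla\rho_{h,t}\|^2$ from \eqref{i:ineq3}, and remove the dependence on $\|\rho_{h,t}(0)\|$ by a time-localization argument that produces the $t_0^{-1}\Mm$ term (the paper integrates from $t'$ to $t$ and then averages over $t'\in[0,t_0]$, which is equivalent in effect to your multiplication by $t$; both lean on $\int_0^T\|\rho_{h,t}\|^2\,dt\le C\Mm$ from Theorem \ref{gradB}). The one genuine divergence is the boundary term. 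You route it through Lemma \ref{traceest}(ii), which costs you the full $\|\nabla\rho_{h,t}\|_{L^\beta}^\beta$ and therefore forces the H\"older interpolation $\|\nabla\rho_{h,t}\|_{L^\beta}^\beta\le C\bigl(\int_\Omega K|\nabla\rho_{h,t}|^2\,dx\bigr)^{\beta/2}\bigl(\int_\Omega(1+|\nabla\rho_h|)^\beta dx\bigr)^{a/2}$ followed by Young's inequality with exponents $2/\beta$, $2/a$; this step is valid (the exponents do conspire, since $K^{-\beta/a}\le C(1+\xi)^\beta$ by \eqref{i:ineq1} and the second factor is $C\Mm^{a/2}$ by Theorem \ref{gradB}), but it is more machinery than needed. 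The paper instead applies the trace theorem directly to get $\|\psi_t\|_{L^\infty(\Gamma)}\bigl[(|\rho_{h,t}|,1)+(|\nabla\rho_{h,t}|,1)\bigr]$ and absorbs $(|\nabla\rho_{h,t}|,1)$ at the $L^1$ level by Cauchy--Schwarz against $K^{1/2}$, needing only $(K^{-1}(|\nabla\rho_h|),1)\le C(1+\|\nabla\rho_h\|_{L^\beta}^a)$; the $\varepsilon$'s are then chosen proportional to $\|\psi_t\|_{L^\infty(\Gamma)}^{-1}$, which is how the product $h(t)(1+\|f_t\|^2)$ arises in \eqref{mid1}. Your version yields the additive bound $\int h(t)\,dt+\int\|f_t\|^2\,dt$, which is no weaker since $h\ge1$, so both arguments close.
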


\begin{proof}
Differentiating \eqref{semidiscreteform} with respect $t$ yields that
 \begin{align*}
(\rho_{h,tt}, w_h) &+ \intd{K(|\nabla \rho_h|)\nabla \rho_{h,t},\nabla w_h} \\
&=- \intd{K'(|\nabla \rho_h|)\frac{\nabla \rho_{h}\cdot \nabla \rho_{h,t}}{|\nabla \rho_h|}\nabla \rho_h ,\nabla w_h } -\intb{ \psi_t, w_h } + \intd{f_t, w_h}.
  \end{align*}
Choosing $ w_h= \rho_{h,t},$ we obtain
    \begin{multline*}
  \frac{1}{2}\ddt \norm{\rho_{h,t}}^2 +\norm{K^{1/2}(|\nabla \rho_h|)\nabla \rho_{h,t}}^2\\
   =- \intd{K'(|\nabla \rho_h|)\frac{\nabla \rho_{h}\cdot \nabla \rho_{h,t}}{|\nabla \rho_h|}\nabla \rho_h ,\nabla \rho_{h,t}   } + \intd{ f_t, \rho_{h,t} }-\intb{\psi_t, \rho_{h,t}}.
  \end{multline*}
  Using \eqref{i:ineq3}, 
  \beq\label{I1}
  \left|- \intd{K'(|\nabla \rho_h|)\frac{\nabla \rho_{h}\cdot \nabla \rho_{h,t}}{|\nabla \rho_h|}\nabla \rho_h ,\nabla \rho_{h,t}   }\right|\le a\norm{K^{1/2}(|\nabla \rho_h|)\nabla \rho_{h,t}}^2.
  \eeq
  Thus 
  \beq\label{midstep0}
  \frac{1}{2}\ddt \norm{\rho_{h,t}}^2 +(1-a)\norm{K(|\nabla \rho_h|)\nabla \rho_{h,t}}^2\le \intd{ f_t, \rho_{h,t} }-\intb{\psi_t, \rho_{h,t}}.
  \eeq
 In virtue of Cauchy's inequality, for all $\varep>0$ 
\beq\label{I2}
( f_t,\rho_{h,t} ) \le \varep\norm{\rho_{h,t}}^2 + C\varep^{-1}\norm{f_t}^2.
\eeq
Using Trace Theorem we obtain,
  \beq\label{I30}
  \left|\intb{\psi_t, \rho_{h,t}}\right|\le \norm{\psi_t}_{L^\infty(\Gamma)}\left[ \intd{|\rho_{h,t}| ,1}+\intd{|\nabla \rho_{h,t}|,1 }\right].
  \eeq
  Again Cauchy's inequality gives that for all $\varep,\varep_1>0$
   \beq\label{Yt2}
   (|\rho_{h,t}|,1) \le \varep\norm{\rho_{h,t}}^2+C\varep^{-1}, 
\eeq
and
\beqs
 (|\nabla \rho_{h,t}|,1) 
 \le  \varep_1 ( K(|\nabla \rho_h|)|\nabla \rho_{h,t}|^2,1)+C\varep_1^{-1} ( K^{-1}(|\nabla \rho_h|,1). 
 \eeqs 
By using \eqref{i:ineq1} and $(1+x)^a \le 1+x^a, x\ge 0$ imply   
 \beq\label{Yt1}
 \begin{aligned}
 (|\nabla \rho_{h,t}|,1) &\le \varep_1 \intd{K(|\nabla \rho_h|)|\nabla \rho_{h,t}|^2,1} + C\varep_1^{-1} (1+|\nabla \rho_h|^a,1)  \\ 
  &\le  \varep_1 \norm{K^{1/2}(|\nabla \rho_h|)\nabla \rho_{h,t}}^2+C\varep_1^{-1}(1+\norm{\nabla \rho_h}_{L^{\beta}}^{a}). 
 \end{aligned} 
 \eeq
 
 Combining \eqref{I30}, \eqref{Yt2} and \eqref{Yt1} shows that  
 \beq\label{I3}
 \begin{split}
\left|\intb{\psi_t, \rho_{h,t}}\right|\le\norm{\psi_t}_{L^\infty(\Gamma)} \Big\{&\varep\norm{\rho_{h,t}}^2+C\varep^{-1}\\
 &+ \varep_1\norm{ K^{1/2}(|\nabla \rho_h|)\nabla \rho_{h,t} }^2+ C\varep_1^{-1}(1+\norm{\nabla \rho_h}_{L^{\beta}}^{a})  \Big\}.
\end{split}
\eeq
 It follows from \eqref{midstep0}, \eqref{I2} and \eqref{I3} that  
\beq\label{bb}
\begin{split}
&  \frac{1}{2}\ddt \norm{\rho_{h,t}}^2 +(1-a)\norm{K^{1/2}(|\nabla \rho_h|)\nabla \rho_{h,t}}^2  \le \norm{\psi_t}_{L^\infty(\Gamma)} \Big\{\varep\norm{\rho_{h,t}}^2+C\varep^{-1}\\
  & + \varep_1\norm{ K^{1/2}(|\nabla \rho_h|)\nabla \rho_{h,t} }^2 +C\varep_1^{-1}(1+ \norm{\nabla\rho_h}_{L^{\beta}}^{a})  \Big\}+\varep\norm{\rho_{h,t}}^2 +C\varep^{-1}\norm{f_t}^2.
  \end{split}
 \eeq
 
   If $\norm{\psi_t(t)}_{L^\infty(\Gamma)}=0$ then 
  \beqs
  \frac{1}{2}\ddt \norm{\rho_{h,t}}^2 +(1-a)\norm{K^{1/2}(|\nabla \rho_h|)\nabla \rho_{h,t}}^2 \le \varep\norm{\rho_{h,t}}^2 +C\varep^{-1}\norm{f_t}^2.
  \eeqs
  
  Dropping the second term of the left hand side in above estimate, selecting $\varep=1/2$  and, for $0<t'\le t_0<t\le T$ integrating from $t'$ to $t$ give   
   \beq
   \begin{split}
  \norm{\rho_{h,t}(t)}^2 d\tau &\le \norm{\rho_{h,t}(t')}^2 +\int_{t'}^t \norm{\rho_{h,t}}^2 d\tau +C\int_{t'}^t \norm{f_t}^2d\tau\\
  &\le\norm{\rho_{h,t}(t')}^2 +\int_0^t \norm{\rho_{h,t}}^2 d\tau +C\int_0^t \norm{f_t}^2d\tau.
  \end{split}
  \eeq 
Now integrating in $t'$ from $0$ to $t_0$, 
  \beq
   t_0\norm{\rho_{h,t}(t)}^2 d\tau \le \int_0^{t_0} \norm{\rho_{h,t}(t')}^2 +t_0\Big(\int_0^t \norm{\rho_{h,t}}^2 d\tau +C\int_0^t \norm{f_t}^2d\tau\Big).
 \eeq 
  Using \eqref{mid0}, we find that
  \beq
   t_0\norm{\rho_{h,t}(t)}^2 d\tau \le \Mm +Ct_0\Big(\Mm+\int_0^t \norm{f_t}^2d\tau\Big),
 \eeq 
  which holds \eqref{mid1}.
  
  Consider $\norm{\psi_t}_{L^\infty(\Gamma)}\neq 0$.  Selecting $\varep_1=\frac{1-a} 2\norm{\psi_t}_{L^\infty(\Gamma)}^{-1}$  and $\varep =\frac 12(\norm{\psi_t}_{L^\infty(\Gamma)}+1 )^{-1} $ in \eqref{bb} yields   
    \begin{multline*}
\ddt \norm{\rho_{h,t}}^2 +(1-a)\norm{ K(|\nabla\rho_h|)\nabla \rho_{h,t} }^2\le \norm{\rho_{h,t}}^2 +C \norm{\psi_t}_{L^\infty(\Gamma)}^2\norm{\nabla \rho_h}_{L^{\beta}}^{a}\\
\quad+C\left(\norm{\psi_t}_{L^\infty(\Gamma)}+1 \right) \left(\norm{\psi_t}_{L^\infty(\Gamma)} +\norm{f_t}^2\right)
 +C\norm{\psi_t}_{L^\infty(\Gamma)}^2\\
\le \norm{\rho_{h,t}}^2 +\norm{\nabla \rho_h}_{L^{\beta}}^{\beta}+C Z(t),
\end{multline*}
where 
\beqs
Z(t) =\left(\norm{\psi_t}_{L^\infty(\Gamma)}+1 \right) \left(\norm{\psi_t}_{L^\infty(\Gamma)} +\norm{f_t}^2\right)
 +\norm{\psi_t}_{L^\infty(\Gamma)}^2+ \norm{\psi_t}_{L^\infty(\Gamma)}^\lambda.
\eeqs

  For $t\ge t_0 \ge t'>0$. Ignoring the the nonnegative term in the left hand side of above inequality, integrating from $t'$ to $t$ and then integrating in $t'$ from $0$ to $t_0$, we find that    
\beqs
 t_0 \norm{\rho_{h,t}}^2  \le  \int_0^{t_0}\norm{\rho_{h,t}(t')}^2 dt'+t_0\int_0^t \Big[\norm{\rho_{h,t}}^2 +\norm{\nabla \rho_h}_{L^{\beta}}^{\beta}\Big]d\tau
 +Ct_0\int_{0}^t Z(\tau)d\tau.
  \eeqs
By virtue of \eqref{mid0},   
\beqs
\int_{0}^t\Big[\norm{\rho_{h,t}}^2+\norm{\nabla \rho_h}_{L^{\beta}}^{\beta}\Big]dt\le C\Mm,\quad \int_0^{t_0}\norm{\rho_{h,t}(t')}^2 dt' \le C\Mm.
\eeqs    
Therefore 
\beq\label{al1}
 t_0 \norm{\rho_{h,t}}^2 \le  C\Mm +C\Mm t_0
 +Ct_0\int_{0}^t Z(\tau) d\tau.
  \eeq
We estimate $Z$-term by
\beq\label{al2}
\begin{split}
Z(t)&\le \left(\norm{\psi_t}_{L^\infty(\Gamma)}+1 \right) \norm{f_t}^2
 +C\left(1+ \norm{\psi_t}_{L^\infty(\Gamma)}^\lambda\right)\\
 &\le C\left(1+ \norm{\psi_t}_{L^\infty(\Gamma)}^\lambda\right)\left(1+\norm{f_t}^2\right).
\end{split}
\eeq
The inequality \eqref{mid1} follows from \eqref{al1} and \eqref{al2}. The proof is complete.  
\end{proof}


The following results can be proved by following the ideas of the proof of Theorem \ref{bound-lq}, \ref{gradB} and \ref{rhderv}.     

\begin{theorem}  Let $0<t_0<T,$ and $q\ge 2$, $\rho$ be a solution to problem \eqref{weakform}. Then there exist constant positive constant $C$ such that
\beq\label{res4r}
\norm{\rho}_{L^\infty(I,L^q(\Omega))} \le C\norm{\rho^0}_{L^q(\Omega)} +C\Big(\int_0^T \big[ 1+\norm{\psi}_{L^\infty(\Gamma)}^\frac{q\lambda}{2} +\norm{f(t)}_{L^q(\Omega)}^q\big]dt\Big)^{\frac 1 q},
\eeq
\beq\label{mid4gradr}
\norm{ \rho_t}_{L^2(I,L^2(\Omega))}^2 + \norm{\nabla \rho}_{L^{\beta}(I,L^{\beta}(\Omega))}^{\beta}+ \norm{\nabla \rho}_{L^\infty(I,L^{\beta}(\Omega))}^{\beta}  +\norm{\rho_h}_{L^\infty(I,L^2(\Omega))}^2\le C \Mm,
\eeq
\beq\label{mid4r-t}
\norm{\rho_t}_{L^\infty(I,L^2(\Omega)  )}^2 \le  Ct_0^{-1}\Mm +C\Mm 
 +C\int_0^T h(t)\left(1+\norm{f_t(t)}^2\right)dt,
  \eeq
where $h$ and $\Mm$ are defined as in \eqref{ghkdef} and \eqref{Mdef}. 
\end{theorem}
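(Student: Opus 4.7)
The proof plan is to mirror, at the continuous level, the three semidiscrete arguments given in Theorems \ref{bound-lq}, \ref{gradB}, and \ref{rhderv}. The key observation is that the test functions used in the discrete proofs, namely $|\rho_h|^{q-1}\sign(\rho_h)$, $\rho_{h,t}$, and the time-derivative of \eqref{semidiscreteform} tested against $\rho_{h,t}$, have exact analogs in $W = H^1(\Omega)$ under the standing regularity hypothesis on the weak solution $\rho$ (stated at the start of \S\ref{Bsec}). So the plan is mainly to redo the calculations with $w_h$ replaced by the corresponding $w \in W$, and then verify that no new obstruction arises.

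For \eqref{res4r}, I would substitute $w = |\rho|^{q-1}\sign(\rho)$ into \eqref{weakform}, producing the counterpart of \eqref{dpalpha}. The nonlinear term is bounded below using \eqref{i:ineq2} as in \eqref{term1}; the boundary term is estimated by the trace inequality \eqref{sec2}, giving the continuous analog of \eqref{term2}; the forcing term is handled by Young's inequality as in \eqref{term3}. After selecting $\varep$ proportional to $\norm{\psi}_{L^\infty(\Gamma)}^{-1}$ (with the $\psi=0$ branch handled separately, as in the discrete case), I apply Gronwall's inequality and then the inequality $(a+b)^{1/q}\le 2^{1/q}(a^{1/q}+b^{1/q})$ to obtain \eqref{res4r}. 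For \eqref{mid4gradr}, the idea is to test \eqref{weakform} with $w = \rho_t$, which gives the analog of \eqref{Diffineq1}, combine it with the $q=2$ instance of the differential inequality from the first step, and use the boundary trace bound \eqref{bnd-est} to absorb $\langle \psi, \rho\rangle$. Comparing $H$ with $|\nabla \rho|^\beta$ via \eqref{i:ineq4} and integrating in time produces \eqref{mid4gradr}, where the initial data $\mathcal E(0)$ contributes the terms $\norm{\rho^0}^2 + \norm{\nabla \rho^0}_{L^\beta(\Omega)}^\beta + \norm{\psi^0}_{L^2(\Gamma)}\norm{\rho^0}_{L^2(\Gamma)}$ that appear in the definition of $\Mm$.

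For \eqref{mid4r-t}, I would differentiate \eqref{weakform} in $t$ and test with $\rho_t$, exactly as in the passage leading to \eqref{midstep0}. The troublesome term involving $K'$ is controlled by the degeneracy bound \eqref{i:ineq3}, exactly as in \eqref{I1}. The boundary term $\langle \psi_t, \rho_t\rangle$ is split via the trace theorem into an interior term and a gradient term, and the latter is absorbed into $\norm{K^{1/2}(|\nabla \rho|)\nabla \rho_t}^2$ after applying Cauchy's inequality with a weight proportional to $\norm{\psi_t}_{L^\infty(\Gamma)}^{-1}$, as in \eqref{Yt1}--\eqref{I3}. Integrating from $t'$ to $t$ and then in $t'$ from $0$ to $t_0$ (the double-integration trick that produces the $t_0^{-1}$ factor) and invoking the already-established bound \eqref{mid4gradr} on $\int_0^t \norm{\rho_t}^2 + \norm{\nabla \rho}_{L^\beta}^\beta\, d\tau$ delivers \eqref{mid4r-t} after collecting the $Z$-estimate \eqref{al2}.

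The main obstacle, rather than any new analytical device, is simply to justify that the chosen test functions are admissible and that the time-differentiated version of \eqref{weakform} is legitimate. Both are covered by the blanket assumption that the weak solution is sufficiently regular in $x$ and $t$ (stated explicitly in \S\ref{Bsec}): $\rho_t \in L^2(L^2)$, $\rho \in L^\infty(L^q)$ with $|\rho|^{q-1}\sign(\rho) \in W$, and enough smoothness for $\rho_{tt}$ to be tested in duality with $\rho_t$. Once these are granted, each step of the discrete proofs transfers verbatim with $W_h$ replaced by $W$, and no extra idea is required beyond this translation.
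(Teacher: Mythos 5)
Your proposal is correct and coincides with the paper's own treatment: the paper gives no separate proof, stating only that the result ``can be proved by following the ideas of the proof of Theorem \ref{bound-lq}, \ref{gradB} and \ref{rhderv},'' which is precisely the translation you carry out, with the same test functions, the same trace and monotonicity lemmas, and the same double-integration trick for the $t_0^{-1}$ factor. Your added remark that admissibility of the test functions and the time-differentiated equation rests on the blanket regularity assumption of \S\ref{Bsec} matches the paper's standing hypotheses.
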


\section{Error estimates}\label{errSec} In this section we will establish the error estimates between analytical solution and approximation solution in several norms.

\subsection{Error estimate for continuous Galerkin method} We will find the error bounds in the semidiscrete method by comparing the computed solution to the projections of the true solutions. To do this, we restrict the test functions in \eqref{weakform} to the finite dimensional spaces. Let
\beq
\chi=\rho -\rho_h = (\rho -\pi\rho) - (\rho_h- \pi\rho) \eqdef \vartheta -\theta_h.
\eeq
\begin{theorem}\label{err-L2}   Let $\rho,\rho_h$ be two solution to problems \eqref{weakform} and \eqref{semidiscreteform} respectively. Assume that $\rho\in L^\infty(I,H^r(\Omega)\cap W^{r,\beta}(\Omega)  )$, $\rho_t\in L^2(I,H^r(\Omega) )$. Then there exist positive constants $C$ independence of $h$ such that    
  \beq\label{err20}
  \begin{split}
 \norm {\rho -\rho_h}_{L^\infty (I,L^2(\Omega))}&+ \mathcal M^{-\frac\gamma 2}\norm{\nabla(\rho -\rho_h)}_{L^2(I,L^\beta(\Omega) ) } \le Ch^r\norm{\rho}_{L^\infty (I,H^r(\Omega))}\\
 &+ C\Mm^{\frac{1}{2\beta\lambda}} h^{\frac{r-1}2 } \norm{\rho}_{L^1(I,W^{r,\beta}(\Omega))}^{\frac 1 2}+CT^{\frac12}h^r\norm{\rho_t}_{L^2(I,H^r(\Omega))},
 \end{split}
 \eeq
  where $\mathcal M$ is defined as in \eqref{Mdef}.
\end{theorem}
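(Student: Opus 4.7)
The plan is to compare $\rho_h$ with the $L^2$-projection $\pi\rho$ of the true solution, writing $\chi = \vartheta - \theta_h$ with $\vartheta = \rho - \pi \rho$ and $\theta_h = \rho_h - \pi \rho \in W_h$. Subtracting \eqref{semidiscreteform} from \eqref{weakform} tested against $w = w_h = \theta_h$ produces the error identity
\beqs
(\chi_t, \theta_h) + (K(|\nabla \rho|)\nabla \rho - K(|\nabla \rho_h|)\nabla \rho_h, \nabla \theta_h) = 0,
\eeqs
and after writing $\nabla \theta_h = \nabla \vartheta - \nabla \chi$ and $(\chi_t, \theta_h) = (\vartheta_t, \theta_h) - \tfrac 12 \tfrac{d}{dt}\|\theta_h\|^2$, I would rearrange to obtain
\beqs
\tfrac 12 \tfrac{d}{dt}\|\theta_h\|^2 + (K(|\nabla \rho|)\nabla \rho - K(|\nabla \rho_h|)\nabla \rho_h, \nabla \chi) = (\vartheta_t, \theta_h) + (K(|\nabla \rho|)\nabla \rho - K(|\nabla \rho_h|)\nabla \rho_h, \nabla \vartheta),
\eeqs
with no initial contribution since $\theta_h(0) = \pi\rho^0 - \rho_h^0 = 0$.

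The left-hand nonlinear pairing is bounded below by the integral monotonicity \eqref{Mono} as $C\omega\|\nabla \chi\|_{L^\beta}^2$, where $\omega = (1+\max\{\|\nabla \rho\|_{L^\beta}, \|\nabla \rho_h\|_{L^\beta}\})^{-a}$; the a priori bounds \eqref{mid0}, \eqref{mid4gradr} yield $\|\nabla \rho\|_{L^\beta}, \|\nabla \rho_h\|_{L^\beta} \le C\Mm^{1/\beta}$ uniformly in $t$, hence $\omega \ge C\Mm^{-\gamma}$. For the nonlinear source on the right I would apply $K(\xi)\xi \le C\xi^{1-a}$ from \eqref{i:ineq2} together with Hölder's inequality with conjugate exponents $\beta/(1-a)$ and $\beta$:
\beqs
|(K(|\nabla \rho|)\nabla \rho - K(|\nabla \rho_h|)\nabla \rho_h, \nabla \vartheta)| \le C(\|\nabla \rho\|_{L^\beta}^{1-a} + \|\nabla \rho_h\|_{L^\beta}^{1-a})\|\nabla \vartheta\|_{L^\beta} \le C\Mm^{(1-a)/\beta}\|\nabla \vartheta\|_{L^\beta}.
\eeqs
The term $(\vartheta_t, \theta_h)$ is absorbed by Cauchy--Schwarz and Young's inequality in the form $|(\vartheta_t, \theta_h)| \le \tfrac{1}{2T}\|\theta_h\|^2 + \tfrac{T}{2}\|\vartheta_t\|^2$; after integration in time the first piece is absorbed into $\|\theta_h\|_{L^\infty(I, L^2)}^2$.

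Integrating the resulting differential inequality from $0$ to $t$, taking the supremum over $t \in [0,T]$, and carrying out the absorption leads to
\beqs
\|\theta_h\|_{L^\infty(I, L^2)}^2 + \Mm^{-\gamma}\|\nabla \chi\|_{L^2(I, L^\beta)}^2 \le C\Mm^{(1-a)/\beta}\|\nabla \vartheta\|_{L^1(I, L^\beta)} + CT\|\vartheta_t\|_{L^2(I, L^2)}^2.
\eeqs
Inserting the standard projection estimates $\|\nabla \vartheta\|_{L^\beta} \le Ch^{r-1}\|\rho\|_{W^{r,\beta}}$, $\|\vartheta_t\| \le Ch^r\|\rho_t\|_{H^r}$, and $\|\vartheta\|_{L^\infty(I, L^2)} \le Ch^r\|\rho\|_{L^\infty(I, H^r)}$, taking the square root, and combining with the triangle inequality $\|\chi\|_{L^\infty(I, L^2)} \le \|\vartheta\|_{L^\infty(I, L^2)} + \|\theta_h\|_{L^\infty(I, L^2)}$ produces the three contributions in \eqref{err20}.

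The principal obstacle is the degeneracy of $K$. The monotonicity gives only an $L^\beta$-estimate on $\nabla \chi$, weighted by the factor $\omega$ that shrinks like $\Mm^{-\gamma}$ in the regime of large gradients; meanwhile, the nonlinear source can be controlled only in the weaker $L^1$-in-time, $L^\beta$-in-space norm of $\nabla \vartheta$. Taking a single square root to convert the quadratic energy inequality into a linear estimate on $\chi$ then transfers this $L^1$ source into an $L^{1/2}$-type factor and costs a power $h^{1/2}$ in the gradient rate, which is the origin of the sub-optimal order $h^{(r-1)/2}$ in the middle term of \eqref{err20}; balancing the $\omega$ weight against the nonlinear source via Young's inequality without spoiling the absorption structure is the crucial technical step.
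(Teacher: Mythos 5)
Your argument follows the paper's proof essentially step for step: the same decomposition $\chi=\vartheta-\theta_h$ tested with $w_h=\theta_h$, the monotonicity bound \eqref{Mono} weighted by $\omega\ge C\Mm^{-\gamma}$ via the a priori estimates \eqref{mid0} and \eqref{mid4gradr}, the growth bound \eqref{i:ineq2} with H\"older's inequality for the consistency term, Young's inequality with $\varepsilon=1/(2T)$, integration in time, and a final square root combined with the projection estimates and the triangle inequality. The only discrepancy is the power of $\Mm$ on the middle term --- your H\"older computation correctly yields $\Mm^{1/(2\lambda)}$ where the paper's step \eqref{r2} drops a factor and states $\Mm^{1/(2\beta\lambda)}$ --- which affects only the constant, not the convergence rate in $h$.
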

\begin{proof}
Subtracting \eqref{semidiscreteform} from \eqref{weakform} we obtain the error equation 
\beq\label{errEq}
 (\rho_{t}- \rho_{h,t}, w_h) + (K(|\nabla \rho|)\nabla \rho -K(|\nabla \rho_h|)\nabla \rho_h ,\nabla w_h) =0,  \quad w_h\in W_h.
\eeq
Choosing $w_h=\theta_h$ in \eqref {errEq} gives
\beq\label{Erreql2}
\begin{aligned}
 \frac 1 2\ddt \norm{\theta_h}^2 &+(K(|\nabla \rho|)\nabla \rho -K(|\nabla \rho_h|)\nabla \rho_h ,\nabla\rho -\nabla\rho_h ) \\
 &= (\vartheta_t,\theta_h) +  (K(|\nabla \rho|)\nabla \rho -K(|\nabla \rho_h|)\nabla \rho_h ,\nabla \vartheta). 
 \end{aligned} 
 \eeq
By the monotonicity of $K(\cdot)$ in \eqref{Mono}, there is a positive constant $C_0$ independence of $h$ and $t$ satisfying
 \beqs 
 (K(|\nabla \rho|)\nabla \rho -K(|\nabla \rho_h|)\nabla \rho_h ,\nabla\rho -\nabla\rho_h)  \ge C_0 \omega(t)\norm{\nabla\chi}_{L^\beta}^2. 
    \eeqs
    where 
    $   \omega(t) = \left(1+\norm{\nabla \rho}_{L^{\beta}}+ \norm{\nabla \rho_h}_{L^{\beta}}\right)^{-a}. $
    
Observing  from \eqref{mid0} that there is a positive constant $C_1$ independence of $h$ and $t$ such that 
 \beq\label{Odef}
 \begin{split}
 \omega^{-1}(t)&=\left(1+\norm{\nabla \rho}_{L^{\beta}}+ \norm{\nabla \rho_h}_{L^{\beta}}\right)^{\beta\gamma}\\
 & \le 2^\beta \left(1+\norm{\nabla \rho}_{L^{\beta}}^\beta+ \norm{\nabla \rho_h}_{L^{\beta}}^\beta\right)^{\gamma} \le C_1\mathcal M^{\gamma}. 
 \end{split}
 \eeq
Thus 
\beq\label{r0} 
 (K(|\nabla \rho|)\nabla \rho -K(|\nabla \rho_h|)\nabla \rho_h ,\nabla\rho -\nabla\rho_h)  \ge \frac{C_0}{C_1} \Mm^{-\gamma} \norm{\nabla\chi}_{L^\beta}^2. 
\eeq                      
By Cauchy's inequality, for $\varep_0>0$  
\beq\label{r1}
(\vartheta_t, \theta_h) \le C\varep_0^{-1} \norm{\vartheta_t}^2 +\varep_0 \norm{\theta_h}^2,
\eeq
Using \eqref{i:ineq2}, H\"older inequality and then \eqref{mid0} we obtain
\beq\label{r2}
\begin{aligned}
(K(|\nabla \rho|)\nabla \rho -K(|\nabla \rho_h|)\nabla \rho_h ,\nabla \vartheta)
&\le C(|\nabla \rho|^{\beta-1}+|\nabla \rho_h|^{\beta-1} ,|\nabla \vartheta|)\\
& \le C(\norm{\nabla \rho}_{L^\beta}^{\frac{1}\lambda}+\norm{\nabla \rho_h}_{L^\beta}^{\frac{1}\lambda})\norm{\nabla \vartheta}_{L^\beta} \\
&\le C (\norm{\nabla \rho}_{L^\beta}+\norm{\nabla \rho_h}_{L^\beta})^{\frac{1}{\lambda}}\norm{\nabla \vartheta}_{L^\beta}\\
&\le C\Mm^{\frac{1}{\beta\lambda}}\norm{\nabla \vartheta}_{L^\beta}.
\end{aligned}
\eeq
We obtain from \eqref{Erreql2}, \eqref{r0}, \eqref{r1} and \eqref{r2} that 
\begin{align*}
\frac 12\ddt\norm{\theta_h}^2+\frac{C_0}{C_1} \mathcal M^{-\gamma} \norm{\nabla\chi}_{L^\beta}^2\le C\Mm^{\frac{1}{\beta\lambda}}\norm{\nabla \vartheta}_{L^\beta}+C\varep_0^{-1}\norm{\vartheta_t}^2 +\varep_0\norm{\theta_h}^2.
\end{align*}
Integrating in time from $0$ to $T$ and then taking sup-norm in time of resultant show that 
\beqs
\begin{split}
\norm{\theta_h}_{L^\infty(I,L^2)}^2+ \mathcal M^{-\gamma} \int_0^T \norm{\nabla\chi}_{L^\beta}^2 dt \le C\Mm^{\frac{1}{\beta\lambda}}\int_0^T \norm{\nabla \vartheta}_{L^\beta} dt+C\varep_0^{-1}\int_0^T \norm{\vartheta_t}^2 dt+\varep_0 T \norm{\theta_h}_{L^\infty(I,L^2)}^2.
\end{split}
\eeqs
Selecting $\varep_0 =\frac 1{2T},$ we find that 
\beq\label{errl2}
\norm{\theta_h}_{L^\infty(I,L^2)}^2+\Mm^{-\gamma} \int_0^T \norm{\nabla\chi}_{L^\beta}^2 dt \le C\Mm^{\frac{1}{\beta\lambda}}\int_0^T \norm{\nabla \vartheta}_{L^\beta} dt+CT\int_0^T \norm{\vartheta_t}^2 dt.
\eeq
It follows directly from \eqref{errl2} that  
\beq\label{core0}
\begin{split}
\norm{\theta_h}_{L^\infty(I,L^2)}  + \mathcal M^{-\frac\gamma 2}\norm{\nabla\chi}_{L^2(I,L^\beta)} &\le C\Mm^{\frac{1}{2\beta\lambda}}\norm{\nabla \vartheta}_{L^1(I,L^\beta)}^{\frac 1 2}+CT^{\frac 1 2}\norm{\vartheta_t}_{L^2(I,L^2)}\\
&\le C\Mm^{\frac{1}{2\beta\lambda}} h^{\frac{r-1} 2}\norm{\rho}_{L^1(I,W^{r,\beta})}^{\frac 1 2}+CT^{\frac12}h^r \norm{\rho_t}_{L^2(I,H^r)}.
\end{split}
\eeq
By triangle inequality, 
  \beqs
  \norm{\chi}_{L^\infty (I,L^2)} \le \norm{\vartheta}_{L^\infty (I,L^2)}+\norm{\theta_h}_{L^\infty (I,L^2)}. 
   \eeqs
This and \eqref{core0} imply \eqref{err20}. Thus the proof is complete.  
 \end{proof}

Now we give the error estimate in $L^q$-norm for any $q>2.$
\begin{theorem}\label{err-Lalp}  Let $q\in(2, \infty)$, $\rho$ solve the problem  \eqref{weakform} and $\rho_h$ solve the semidiscrete problem \eqref{semidiscreteform} . Assume that $\rho\in L^\infty(I,W^{r,q}(\Omega)\cap W^{r,2q}(\Omega))$, $\nabla\rho\in L^{2q}(I,L^{2q}(\Omega))$ , $\rho_t\in L^q(I, W^{r,q} (\Omega))$. Then there exist constant positive constant $C$ independence of $h$ and $T$ such that   
\beq \label{err1}
\begin{split}
 \norm {\rho -\rho_h}_{L^\infty (I,L^q(\Omega))}\le \norm{\rho -\pi\rho}_{L^\infty (I,L^q(\Omega))}&+ CT^{1-\frac 1 q} \norm{(\rho -\pi\rho)_t}_{L^q(I,L^q(\Omega))}\\
 &+ C\mathcal A^{\frac1q} T^{\frac12-\frac1q}\norm{\nabla (\rho -\pi\rho)}_{L^{2q}(I,L^{2q}(\Omega))},
 \end{split}
 \eeq
 where 
 \beq\label{Adef}
\mathcal A= \left(\int_0^T \left[1+\norm{\nabla \rho}_{L^{aq}(\Omega)}^{aq}+ \norm{\nabla \rho_h}_{L^{aq}(\Omega) }^{aq} \right]dt\right)^{\frac 1 2}.
\eeq
Consequently, 
\beq\label{err2}
  \begin{split}
 \norm {\rho -\rho_h}_{L^\infty (I,L^q(\Omega))}\le Ch^r\norm{\rho}_{L^\infty (I,W^{r,q}(\Omega))}&+ CT^{1-\frac 1 q}h^r \norm{\rho_t}_{L^q(I,W^{r,q}(\Omega))}\\
&+ C\mathcal A^{\frac 1q} T^{\frac12-\frac1q} h^{r-1}\norm{\rho }_{L^{2q}(I,W^{r,2q}(\Omega))}.
 \end{split}
 \eeq
\end{theorem}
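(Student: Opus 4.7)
The plan is to mirror the proof of Theorem \ref{err-L2}, but to test the error equation with the weighted function $w_h = |\theta_h|^{q-1}\sign(\theta_h)$ so that the time derivative produces $\frac{1}{q}\ddt\|\theta_h\|_{L^q(\Omega)}^q$. Since $\nabla w_h = (q-1)|\theta_h|^{q-2}\nabla\theta_h$, decomposing $\nabla\theta_h = \nabla\vartheta - \nabla\chi$ and subtracting \eqref{semidiscreteform} from \eqref{weakform} yields the identity
\[
\frac{1}{q}\ddt\|\theta_h\|_{L^q}^q + (q-1)\bigl(K(|\nabla\rho|)\nabla\rho - K(|\nabla\rho_h|)\nabla\rho_h,\, |\theta_h|^{q-2}\nabla\chi\bigr) = (\vartheta_t, w_h) + (q-1)\bigl(K(|\nabla\rho|)\nabla\rho - K(|\nabla\rho_h|)\nabla\rho_h,\, |\theta_h|^{q-2}\nabla\vartheta\bigr).
\]
The pointwise monotonicity \eqref{Qineq}, multiplied by the nonnegative weight $|\theta_h|^{q-2}$, shows that the second term on the left is $\geq 0$, and I would simply discard it; unlike the $L^2$ case, no $\mathcal M$-weighted lower bound is needed here.

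Next I would estimate the right-hand side. By H\"older's inequality, $(\vartheta_t, w_h) \leq \|\vartheta_t\|_{L^q(\Omega)}\|\theta_h\|_{L^q(\Omega)}^{q-1}$. For the cross-gradient term I invoke \eqref{i:ineq2} to bound $|K(|\nabla\rho|)\nabla\rho|\leq c_2|\nabla\rho|^{1-a}$ (and similarly for $\nabla\rho_h$), and then apply a three-factor H\"older in space to isolate the combination $\|\theta_h\|_{L^q}^{q-2}\cdot\|\nabla\vartheta\|_{L^{2q}}$ against a residual factor controlled by $\|\nabla\rho\|_{L^{aq}}$ and $\|\nabla\rho_h\|_{L^{aq}}$.

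Integrating in $t$ from $0$ to $t\in I$, using $\theta_h(0)=0$, and applying H\"older in time produces the factor $T^{1-1/q}$ attached to $\|\vartheta_t\|_{L^q(I,L^q)}$ from the time-derivative term. For the cross-gradient contribution, Cauchy--Schwarz in time pairs the residual $L^{aq}$-norm of $\nabla\rho,\nabla\rho_h$ against $\|\nabla\vartheta\|_{L^{2q}(I,L^{2q})}$, generating the combination $\mathcal{A}\cdot\|\nabla\vartheta\|_{L^{2q}(I,L^{2q})}^{q}\cdot T^{q/2-1}$ up to constants. Taking the sup in $t$, absorbing the $\|\theta_h\|_{L^\infty(I,L^q)}$-factor by Young's inequality, and raising to the $1/q$ power yields \eqref{err1} once combined with the triangle inequality $\|\chi\|_{L^q}\leq\|\vartheta\|_{L^q}+\|\theta_h\|_{L^q}$. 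The estimate \eqref{err2} then follows by substituting the projection bounds \eqref{prjpi} for $\vartheta$, $\vartheta_t$, and $\nabla\vartheta$.

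The hard part will be the cross-gradient estimate. Picking the H\"older exponents in space and time so that a $(1-a)$-power of $|\nabla\rho|$ combines cleanly with a Cauchy--Schwarz in time into the $aq$-exponent inside $\mathcal{A}$, while the $T$-power and $\|\nabla\vartheta\|_{L^{2q}(I,L^{2q})}$ come out with the exponents shown in \eqref{err1}, requires careful bookkeeping and may well force a combined mixed-norm H\"older rather than iterated ones. The boundedness results from Section \ref{Bsec} (particularly \eqref{mid0} and \eqref{mid4gradr}) ensure that $\mathcal A$ is finite under the stated regularity on $\rho$; the remaining ingredients (monotonicity, the Hölder bound on the time-derivative term, and the projection approximation bounds) are essentially the $L^q$ analogues of what appears in the proof of Theorem \ref{err-L2}.
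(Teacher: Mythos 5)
Your setup (test function $w_h=|\theta_h|^{q-1}\sign\theta_h$, decomposition into $\vartheta$ and $\theta_h$, Young's inequality with $T$-dependent $\varep$ for the $(\vartheta_t,w_h)$ term) matches the paper. But there is a genuine gap in your treatment of the cross-gradient term, and it originates in your decision to \emph{discard} the monotonicity term. The paper keeps $(q-1)(\beta-1)\big(K(\xi)|\nabla\chi|^2,|\theta_h|^{q-2}\big)$ on the left precisely so that it can absorb half of itself later: the cross term is first bounded via the Lipschitz estimate \eqref{Kcont} (not \eqref{i:ineq2}) by $C\big(|\nabla\chi|,|\theta_h|^{q-2}|\nabla\vartheta|\big)$, and then split by the \emph{weighted} Cauchy--Schwarz inequality $|\nabla\chi|\,|\nabla\vartheta|\le \varep_0 K(\xi)|\nabla\chi|^2+C\varep_0^{-1}K^{-1}(\xi)|\nabla\vartheta|^2$ with $\xi=\max\{|\nabla\rho|,|\nabla\rho_h|\}$. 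The first piece is absorbed into the retained monotonicity term; the second piece, after Young with exponents $q/(q-2)$ and $q/2$, produces $\norm{K^{-1/2}(\xi)\nabla\vartheta}_{L^q}^{q}$, i.e.\ $|\nabla\vartheta|$ enters at power $q$. Two Cauchy--Schwarz applications (in $t$, then in $x$) together with $K^{-q}(\xi)\le C(1+\xi^{aq})$ then yield exactly $\mathcal A\,\norm{\nabla\vartheta}_{L^{2q}(I,L^{2q})}^{q}$ — this is where the exponent $aq$ in \eqref{Adef} comes from.

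Your route cannot reproduce this. Bounding each of $K(|\nabla\rho|)\nabla\rho$ and $K(|\nabla\rho_h|)\nabla\rho_h$ separately by $c_2|\nabla\rho|^{1-a}$, $c_2|\nabla\rho_h|^{1-a}$ leaves $|\nabla\vartheta|$ at the \emph{first} power in the integrand, so after the $q/(q-2)$--$q/2$ Young step it appears only at power $q/2$; taking $q$-th roots at the end then gives $\norm{\nabla\vartheta}^{1/2}\sim h^{(r-1)/2}$ instead of the $h^{r-1}$ rate in \eqref{err2}. Moreover, your three-factor H\"older forces the residual gradient factor into $L^{2q(1-a)/3}$ (or, with other exponent choices, into $L^{(1-a)q}$), with a $(1-a)$-power — there is no way to arrive at the $\norm{\nabla\rho}_{L^{aq}}^{aq}$ appearing in $\mathcal A$, since that exponent is generated by the inverse weight $K^{-1}(\xi)$ and not by the pointwise bound $K(\xi)\xi\le c_2\xi^{1-a}$. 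The fix is to use \eqref{Kcont} for the cross term and keep the weighted monotonicity term on the left to absorb the $K(\xi)|\nabla\chi|^2|\theta_h|^{q-2}$ contribution.
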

\begin{proof}
Choosing $w_h=|\theta_h|^{q-1}\sign \theta_h$ in \eqref {errEq} we have
\beqs
 (\vartheta_t -\theta_{h,t}, |\theta_h|^{q-1}\sign \theta_h ) +(q-1) (K(|\nabla \rho|)\nabla \rho -K(|\nabla \rho_h|)\nabla \rho_h , |\theta_h|^{q-2}\nabla \theta_h ) =0.
\eeqs
We rewrite as form
\begin{align*}
 (\theta_{h,t}, |\theta_h|^{q-1}\sign \theta_h) +(q-1)(K(|\nabla \rho|)\nabla \rho -K(|\nabla \rho_h|)\nabla \rho_h ,|\theta_h|^{q-2}(\nabla\rho -\nabla\rho_h) ) \\
 = (\vartheta_t,|\theta_h|^{q-1}\sign \theta_h) +  (q-1)(K(|\nabla \rho|)\nabla \rho -K(|\nabla \rho_h|)\nabla \rho_h ,|\theta_h|^{q-2}\nabla \vartheta). 
 \end{align*} 
 The monotonicity of $K(\cdot)$ in \eqref{Qineq} provides
 \beq 
 (K(|\nabla \rho|)\nabla \rho -K(|\nabla \rho_h|)\nabla \rho_h ,|\theta_h|^{q-2}(\nabla\rho -\nabla\rho_h))  \ge (\beta-1) (K(\xi)|\nabla\chi|^2, |\theta_h|^{q-2} ), 
    \eeq
    where $\xi=\max\{|\nabla \rho|, |\nabla \rho_h|  \}.$
Hence 
\begin{multline*}
 \frac 1 q \ddt\norm{\theta_h}_{L^q}^q+ (q-1)(\beta-1)(K(\xi)|\nabla \chi|^2,|\theta_h|^{q-2}) \\
  \le (\vartheta_t, |\theta_h|^{q-1}\sign \theta_h) +  (q-1)(K(|\nabla \rho|)\nabla \rho -K(|\nabla \rho_h|)\nabla \rho_h ,|\theta_h|^{q-2}\nabla \vartheta).
\end{multline*} 
Using Young's inequality with exponents $q$ and $\frac{q}{q-1}$,  
\beq\label{r1a}
(\vartheta_t, |\theta_h|^{q-1}\sign \theta_h) \le q^{-1}\left(\frac{q}{q-1}\right)^{1-q} \varep^{1-q}\norm{\vartheta_t}_{L^q}^{q} +\varep \norm{\theta_h}_{L^q}^{q}.
\eeq
Using \eqref{Kcont}, H\"older's inequality and Young's inequality we obtain, for $\varep, \varep_0>0,$
\beq\label{r2a}
\begin{aligned}
&(K(|\nabla \rho|)\nabla \rho -K(|\nabla \rho_h|)\nabla \rho_h ,|\theta_h|^{q-2}\nabla \vartheta)
\le C (|\nabla\chi|,|\theta_h|^{q-2}|\nabla \vartheta|)\\
&\qquad\le \varep_0 (K(\xi)|\nabla \chi|^2,|\theta_h|^{q-2}) +C\varep_0^{-1}(K^{-1}(\xi)|\nabla \vartheta|^2 ,|\theta_h|^{q-2})\\
&\qquad\le \varep_0 (K(\xi)|\nabla \chi|^2,|\theta_h|^{q-2})+ \varep\norm{\theta_h}_{L^q}^q + C\varep_0^{-\frac q 2}\varep^{-\frac{q-2}2}\norm{K^{-\frac 1 2}(\xi)\nabla \vartheta}_{L^q}^{q}.
\end{aligned}
\eeq
From \eqref{r1a}, \eqref{r2a} with $\varep_0=(q-1)(\beta-1)/2$,  we find that 
\begin{multline}\label{ineq2}
\frac 1 q \ddt\norm{\theta_h}_{L^q}^q+\frac{(q-1)(\beta-1)}2 (K(\xi)|\nabla \chi|^2,|\theta_h|^{q-2})\\
 \le C\varep^{1-q} \norm{\vartheta_t}_{L^q}^{q} + 2\varep\norm{\theta_h}_{L^q}^{q} +C\varep^{-\frac{q-2}2}\norm{K^{-\frac 1 2}(\xi)\nabla \vartheta}_{L^q}^{q}.
\end{multline}
Integrating in time from $0$ to $T$,
\begin{align*}
&\frac 1 q \norm {\theta_h}_{L^\infty(I,L^q)}^q +\frac {(q-1)(\beta-1)}2\int_0^T (K(\xi)|\nabla \chi|^2,|\theta_h|^{q-2}) dt\\
&\qquad\le  2\varep T\norm{\theta_h}_{L^\infty(I,L^q)}^{q} +C\varep^{1-q}\int_0^T \norm{\vartheta_t}_{L^q}^{q} dt +C\varep^{-\frac{q-2}2}\int_0^T \norm{K^{-\frac 1 2}(\xi)\nabla \vartheta}_{L^q}^{q}dt.
\end{align*}    
Choosing $\varep = \frac 1{4qT}$, using $\eqref{i:ineq1}$ we find that
\begin{align*}
\norm {\theta_h}_{L^\infty(I,L^q)}^q &+ \int_0^T (K(\xi)|\nabla \chi|^2,|\theta_h|^{q-2}) ds\\
 &\le CT^{q-1}\int_0^T \norm{\vartheta_t}_{L^q}^{q} dt +CT^{\frac{q-2}2}\int_0^T \norm{K^{-\frac 1 2}(\xi)\nabla \vartheta}_{L^q}^{q}dt.
\end{align*}   
Note that 
\begin{align*}
\int_0^T \norm{K^{-\frac 1 2}(\xi)\nabla \vartheta}_{L^q}^{q}dt &= \int_\Omega \int_0^T K^{-\frac q 2}(\xi)|\nabla \vartheta|^{q}dt dx\\
&\le \int_\Omega \Big(\int_0^T K^{-q}(\xi)dt\Big)^{\frac 1 2} \Big(\int_0^T|\nabla \vartheta|^{2q}dt\Big)^{\frac 1 2} dx\\
&\le  \Big(\int_0^T \int_\Omega K^{-q}(\xi)dxdt\Big)^{\frac 1 2} \Big(\int_0^T\int_\Omega|\nabla \vartheta|^{2q}dxdt\Big)^{1/2}\\
&\le  C\Big(\int_0^T \int_\Omega (1+\xi^{aq})dxdt\Big)^{\frac 1 2} \norm{ \nabla \vartheta}_{L^{2q}(I,L^{2q}) }^q.
\end{align*}
Moreover $1+\xi^{aq} \le C(1+ \max\{|\nabla \rho|^{aq}, |\nabla \rho_h|^{aq} \}). $ 
 Thus
\begin{multline*}
\norm {\theta_h}_{L^\infty(I,L^q)}^q + \int_0^T (K(\xi)|\nabla \chi|^2,|\theta_h|^{q-2}) dt  \le CT^{q-1}\int_0^T \norm{\vartheta_t}_{L^q}^{q} dt\\ +CT^{\frac{q-2}2}\Big(\int_0^T \int_\Omega 1+ \max\{|\nabla \rho|^{aq}, |\nabla \rho_h|^{aq} \}dxdt\Big)^{\frac 1 2}    \norm{ \nabla \vartheta}_{L^{2q}(I,L^{2q}) }^q.
\end{multline*}   
Dropping the second  term of the LHS in above estimate  we obtain  
\beq\label{sp2}
\norm {\theta_h}_{L^\infty(I,L^q)} \le CT^{1-\frac 1 q} \norm{\vartheta_t}_{L^q(I,L^q)}+ C\mathcal A^{\frac 1q} T^{\frac12-\frac 1q}\norm{\nabla \vartheta}_{L^{2q}(I,L^{2q})}.
\eeq
Inequality \eqref{err1} follows thanks to \eqref{sp2} and triangle inequality 
  \beqs
  \norm{\chi}_{L^\infty (I,L^q)} \le \norm{\vartheta}_{L^\infty (I,L^q)}+\norm{\theta_h}_{L^\infty (I,L^q)}. 
   \eeqs
 This finishes the proof.  
 \end{proof}

For the completeness we derive $L^\infty$-estimate  for $\rho -\rho_h$. 

\begin{theorem}  Let $\rho,\rho_h$ be solutions to problems \eqref{weakform} and \eqref{semidiscreteform} respectively.  Assume that $\rho\in L^\infty(I,W^{r,\infty}(\Omega) )$, 
$\rho_t\in L^2(I,W^{r,q}(\Omega) )$. Then there exist constant positive constant $C$ independence of $h$ such that    
\beq\label{err3}
\begin{split}
 \norm {\rho -\rho_h}_{L^\infty (I,L^\infty(\Omega))}\le Ch^r\norm{\rho}_{L^\infty (I,W^{r,\infty}(\Omega))} &+ CT^{1-\frac 1 q}h^{r-\frac d q} \norm{\rho_t}_{L^q(I,W^{r,q}(\Omega))}\\
 &+ C\mathcal A^{\frac 1q} T^{\frac12-\frac1q} h^{r-1-\frac d q}\norm{\rho }_{L^{2q}(I,W^{r,2q}(\Omega))}.
 \end{split}
 \eeq
 \end{theorem}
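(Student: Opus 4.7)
The plan is to reduce the $L^\infty$ estimate to the $L^q$ estimate already established in Theorem \ref{err-Lalp} by exploiting an inverse inequality on the finite element space $W_h$. First I would split the error as $\chi = \rho - \rho_h = \vartheta - \theta_h$ with $\vartheta = \rho - \pi \rho$ and $\theta_h = \rho_h - \pi \rho$, and apply the triangle inequality
\[
\norm{\rho-\rho_h}_{L^\infty(I,L^\infty(\Omega))} \le \norm{\vartheta}_{L^\infty(I,L^\infty(\Omega))} + \norm{\theta_h}_{L^\infty(I,L^\infty(\Omega))},
\]
treating the two pieces independently.

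For the projection error $\vartheta$, I would invoke the $L^\infty$ projection estimate recorded after (\ref{prjpi}), namely $\norm{\pi w - w}_{L^\infty(\Omega)} \le C h^r \norm{w}_{W^{r,\infty}(\Omega)}$ applied pointwise in $t$, producing the first term
\[
\norm{\vartheta}_{L^\infty(I,L^\infty(\Omega))} \le C h^r \norm{\rho}_{L^\infty(I,W^{r,\infty}(\Omega))}.
\]

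For the discrete part $\theta_h$, the key step is to use the standard inverse inequality on the quasi-uniform family $\{\mathcal T_h\}_h$, $\norm{v_h}_{L^\infty(\Omega)} \le C h^{-d/q} \norm{v_h}_{L^q(\Omega)}$ for all $v_h \in W_h$, applied to $\theta_h(\cdot,t)$. Taking the supremum in $t$ and then invoking the intermediate bound obtained inside the proof of Theorem \ref{err-Lalp} (specifically the inequality just before (\ref{sp2})), I get
\[
\norm{\theta_h}_{L^\infty(I,L^\infty(\Omega))} \le C h^{-d/q}\Big(T^{1-\frac 1q}\norm{\vartheta_t}_{L^q(I,L^q)} + \mathcal A^{\frac 1q} T^{\frac12-\frac 1q}\norm{\nabla \vartheta}_{L^{2q}(I,L^{2q})}\Big).
\]
Substituting the projection estimates $\norm{\vartheta_t}_{L^q(I,L^q)} \le C h^r \norm{\rho_t}_{L^q(I,W^{r,q})}$ and $\norm{\nabla \vartheta}_{L^{2q}(I,L^{2q})} \le C h^{r-1} \norm{\rho}_{L^{2q}(I,W^{r,2q})}$ produces the remaining two terms with the expected powers $h^{r-d/q}$ and $h^{r-1-d/q}$.

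I do not anticipate a serious obstacle here: the nonlinear monotonicity work has already been absorbed into Theorem \ref{err-Lalp}, and the only new ingredient is the inverse inequality on $W_h$, which is automatic under the standing quasi-uniformity hypothesis on $\mathcal T_h$. The one point to be careful about is that the inverse inequality must be applied only to the finite-element remainder $\theta_h$ and not to $\vartheta$ (which does not live in $W_h$), so that the sharper estimate $h^r$ is retained for the projection term while $\theta_h$ pays the inverse-inequality penalty $h^{-d/q}$.
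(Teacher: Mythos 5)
Your proposal is correct and follows essentially the same route as the paper: triangle inequality on $\chi=\vartheta-\theta_h$, the $L^\infty$ projection estimate for $\vartheta$, the inverse inequality $\norm{\theta_h}_{L^\infty(\Omega)}\le Ch^{-d/q}\norm{\theta_h}_{L^q(\Omega)}$ on the quasi-uniform mesh, and then the bound \eqref{sp2} from the proof of Theorem~\ref{err-Lalp} together with the projection approximation estimates. Nothing further is needed.
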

\begin{proof} 
Recall that the quasi-uniform of $\mathcal T_h$ we have the inverse estimate (see \cite{BS08,TV06})
\beqs
\norm{\theta_h}_{L^\infty(\Omega)}\le Ch^{-\frac d  q}\norm{\theta_h}_{L^q(\Omega)}.
\eeqs
This and triangle inequality imply that 
\begin{align*} 
   \norm{\chi}_{L^\infty(I,L^\infty)} 
  &\le   \norm{\vartheta}_{L^\infty(I,L^\infty)}+\norm{\theta_h}_{L^\infty(I,L^\infty)} \\
  &\le  Ch^{r}\norm{\rho}_{L^\infty(I,W^{r,\infty})}+Ch^{-\frac d q}\norm{\theta_h}_{L^\infty(I,L^q)}.
 \end{align*} 
 Thus  inequality \eqref{err3} follows directly from\eqref{sp2}.   
\end{proof}

\subsection { Error estimate for gradient}

Now we give an error estimate for gradient
\begin{theorem}Under the assumption of Theorem~\ref{err-L2}. For any $0<t_0\le t\le T$. There exist constant positive constants $C$ independence of $h$ such that    
\beq\label{ErrGrad}
\begin{aligned}
 \norm {\nabla (\rho -\rho_h)(t)}_{L^\beta(\Omega) }  &\le C \Dd \Mm^{\frac \gamma 2} h^{\frac{r-1}4} \Big\{\Mm^{\frac 1{4\beta\lambda} } \norm{\rho(t)}_{L^2(I,W^{r,\beta}(\Omega))}^{\frac 1 4}\\
&\quad +T^{\frac14}h^{\frac {r+1} 4} \norm{\rho_t(t)}_{L^2(I,H^r(\Omega))}^{\frac 1 2}\Big\} +C \mathcal M^{\frac 1{2\beta\lambda}}h^{\frac{r-1} 2}\norm{\rho(t)}_{W^{r,\beta}(\Omega)}^{\frac 12}.
\end{aligned}
\eeq
where the positive constantt $\Mm$ is defined as in \eqref{Mdef}, and  
\beq\label{Ddef}
\Dd = \Big[ (t_0^{-1}+1)\Mm 
 +\int_0^T h(t)\left(1+\norm{f_t(t)}^2\right)dt\Big]^{\frac 14}. 
\eeq
\end{theorem}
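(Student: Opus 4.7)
The plan is to estimate $\norm{\nabla \chi(t)}_{L^\beta(\Omega)}$ pointwise in $t$ by combining the monotonicity of the operator $K(\cdot)$ with the previously derived $L^\infty(I,L^2)$ error bound and the $L^\infty(I,L^2)$ bound on the time derivative. The starting point is the error equation \eqref{errEq}. Testing with $w_h=\theta_h$ and using the decomposition $\nabla\chi=\nabla\vartheta-\nabla\theta_h$ produces the key identity
\begin{equation*}
\bigl(K(|\nabla \rho|)\nabla \rho - K(|\nabla \rho_h|)\nabla \rho_h,\nabla\chi\bigr)(t) = \bigl(K(|\nabla \rho|)\nabla \rho - K(|\nabla \rho_h|)\nabla \rho_h,\nabla\vartheta\bigr)(t) + (\chi_t,\theta_h)(t).
\end{equation*}

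I would then bound the left-hand side from below by the monotonicity inequality \eqref{Mono}, obtaining $C\omega(t)\norm{\nabla\chi(t)}_{L^\beta}^2$, where by \eqref{Odef} and the a priori bound \eqref{mid0} one has $\omega(t)^{-1}\le C\Mm^{\gamma}$. The two terms on the right are handled separately and pointwise in $t$. The first is controlled exactly as in the proof of Theorem~\ref{err-L2} (estimate \eqref{r2}), giving a bound by $C\Mm^{1/(\beta\lambda)}\norm{\nabla\vartheta(t)}_{L^\beta}$ via \eqref{i:ineq2} and H\"older's inequality with exponents $\lambda$ and $\beta$. The second is estimated by Cauchy--Schwarz as $\norm{\chi_t(t)}\,\norm{\theta_h(t)}$; the factor $\norm{\chi_t(t)}\le \norm{\rho_t(t)}+\norm{\rho_{h,t}(t)}$ is bounded by $C\Dd^2$ thanks to Theorem~\ref{rhderv} and \eqref{mid4r-t}, while $\norm{\theta_h(t)}\le\norm{\chi(t)}+\norm{\vartheta(t)}$ is bounded in $L^\infty(I,L^2)$ using \eqref{core0} and the projection estimate on $\vartheta$.

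Combining these ingredients and dividing by the coercivity factor $\omega(t)$ yields a schematic inequality of the form
\begin{equation*}
\norm{\nabla\chi(t)}_{L^\beta}^2 \le C\,\Mm^{1/(\beta\lambda)}\norm{\nabla\vartheta(t)}_{L^\beta} + C\,\Mm^\gamma\,\Dd^2\,\norm{\theta_h(t)}.
\end{equation*}
Taking the square root via $\sqrt{a+b}\le\sqrt a+\sqrt b$, substituting $\norm{\nabla\vartheta(t)}_{L^\beta}\le Ch^{r-1}\norm{\rho(t)}_{W^{r,\beta}}$ for the first piece, and splitting $\norm{\theta_h}_{L^\infty(I,L^2)}^{1/2}$ into its two constituent orders from \eqref{core0} (one of order $\Mm^{1/(2\beta\lambda)}h^{(r-1)/2}\norm{\rho}_{L^1(I,W^{r,\beta})}^{1/2}$ and one of order $T^{1/2}h^r\norm{\rho_t}_{L^2(I,H^r)}$), then reorganizing exponents, distributes the contributions exactly into the three terms of \eqref{ErrGrad}.

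The main obstacle is the bookkeeping of exponents of $\Mm$, $\Dd$, $h$, and $T$: one must apply the factor $\omega^{-1}\le C\Mm^\gamma$ selectively so that the purely projection-based term retains only $\Mm^{1/(2\beta\lambda)}h^{(r-1)/2}$ without acquiring extra coercivity penalties, and one must distribute the square root across the two components of $\norm{\theta_h}^{1/2}$ so that $h^{(r-1)/4}$ can be factored out of both contributions (using $h^{r/2}=h^{(r-1)/4}\cdot h^{(r+1)/4}$). Beyond this accounting, the argument is a fairly mechanical assembly of monotonicity, H\"older/Young inequalities, the standard $L^2$-projection bound, and the previously established a priori estimates of Theorems~\ref{bound-lq}, \ref{gradB}, \ref{rhderv}, and \ref{err-L2}.
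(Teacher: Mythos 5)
Your proposal follows the paper's own proof essentially step for step: test the error equation with $\theta_h$, bound the monotone term from below by $C\Mm^{-\gamma}\norm{\nabla\chi}_{L^\beta}^2$ via \eqref{Mono} and \eqref{Odef}, estimate $(\chi_t,\theta_h)$ by $C\Dd^2\norm{\theta_h}$ using \eqref{mid1} and \eqref{mid4r-t}, bound the $\nabla\vartheta$ term by $C\Mm^{\frac{1}{\beta\lambda}}\norm{\nabla\vartheta}_{L^\beta}$ via \eqref{i:ineq2} and H\"older, and finally take square roots and insert \eqref{core0} together with the projection estimate. This is the same decomposition, the same lemmas, and the same exponent bookkeeping (including the selective application of the coercivity factor $\Mm^{\gamma}$) as in the paper, so the proposal is correct and not a genuinely different route.
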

\begin{proof}
Choosing  $w_h=\theta_h$ in \eqref {errEq} we have
\begin{multline}\label{sq1}
 \intd{K(|\nabla \rho|)\nabla \rho -K(|\nabla \rho_h|)\nabla \rho_h ,\nabla \rho -\nabla\rho_h }
 \\
  =(\rho_t -\rho_{h,t}, \theta_h) + \intd{K(|\nabla \rho|)\nabla \rho -K(|\nabla \rho_h|)\nabla \rho_h ,\nabla \vartheta}.
\end{multline}
By \eqref{Mono} and \eqref{Odef}, there is a positive constant $C_0$ independence of $h$ such that 
\beqs
(K(|\nabla \rho|)\nabla \rho -K(|\nabla \rho_h|)\nabla \rho_h ,\nabla \rho -\nabla\rho_h )\ge C_0\mathcal M^{-\gamma} \norm{\nabla (\rho-\rho_h)}_{L^\beta}^2.
\eeqs
In virtue of triangle inequality, Holder's inequality we have      
\beq\label{ineq0}
\begin{split}
&(\rho_t -\rho_{h,t}, \theta_h) + \intd{K(|\nabla \rho|)\nabla \rho -K(|\nabla \rho_h|)\nabla \rho_h ,\nabla \vartheta}\\
&\qquad\qquad\qquad\le  (|\rho_t|+|\rho_{h,t}|,|\theta_h|) + C_1(|\nabla \rho|^{\beta-1}+ |\nabla \rho_h|^{\beta-1},|\nabla \vartheta|)\\
&\qquad\qquad\qquad\le (\norm{\rho_t}+\norm{\rho_{h,t}} )\norm{\theta_h} +C_1\Mm ^{\frac 1{\beta\lambda}} \norm{\nabla \vartheta}_{L^\beta}
\end{split}
\eeq
for some positive constant $C_1$ independence of $h$. 

Thus 
\begin{align*}
 \norm {\nabla (\rho -\rho_h)}_{L^\beta}^2\le C_0^{-1}\mathcal M^{\gamma}(\norm{\rho_t}+\norm{\rho_{h,t}} )\norm{\theta_h} + C_1C_0^{-1} \Mm^{\frac{1}{\beta\lambda}}\norm{\nabla \vartheta}_{L^\beta}.
\end{align*}
Thanks to \eqref{mid1} and \eqref{mid4r-t},
\beq
\norm{\rho_t}+\norm{\rho_{h,t}} \le 2\mathcal D^2.
\eeq
Hence there is a positive constant $C$ independence of $h$ such that 
\beq\label{errgrad}
 \norm {\nabla (\rho -\rho_h)}_{L^\beta}\le C \Dd \Mm^{\frac {\gamma}{2}}  \norm{\theta_h}^{\frac 1 2 } +C \Mm^{\frac{1}{2\beta\lambda}}\norm{\nabla \vartheta}_{L^\beta}^{\frac 1 2}.
\eeq
Due to \eqref{core0} and the fact that $\norm{\nabla \vartheta}_{L^\beta}\le Ch^{r-1}\norm{\rho}_{W^{r,\beta}}$ the left hand side of \eqref{errgrad} is bounded by  
\beq\label{lhs0}
C\Dd \Mm^{\frac {\gamma}{2} } \left[\Mm^{\frac 1 {2\beta\lambda} }h^{\frac{r-1} 2}\norm{\rho}_{L^2(I,W^{r,\beta})}^{\frac 12}+T^{\frac 1 2}h^r \norm{\rho_t}_{L^2(I,H^r)}\right]^{\frac 1 2}+ Ch^{\frac{r-1}2}\Mm ^{\frac{1}{2\beta\lambda} }\norm{\rho}_{W^{r,\beta}}^{\frac 1 2}.
\eeq
Thus \eqref{ErrGrad} follows from \eqref{errgrad} and \eqref{lhs0}.
\end{proof}

\subsection{Error analysis for fully discrete Galerkin method}
In analyzing this method, proceed in a similar fashion as for the semidiscrete method, we derive a error estimate for the fully discrete time Galerkin approximation the differential equation.
 
Let
$\rho^n(\cdot) = \rho(\cdot,t_n)$,  be the
true solution evaluated at the discrete time levels.  We will also
denote $\pi \rho^n \in W_h$ to be the projections of the true solutions at the discrete time levels.  As in the semidiscrete case, we use $\chi = \rho -\rho_h$, $\vartheta=\rho-\rho_h$, $\theta_h=\rho_h-\pi \rho$ and $\chi^n$, $\vartheta^n$, $\theta_h^n$  be evaluating $\chi$, $\vartheta$, $\theta_h$ at the discrete time levels.
We also define  
$$
\partial \phi^n = \frac {\phi^{n} -\phi^{n-1} }{\Delta t}.
$$

We rewrite  \eqref{weakform}  with $t=t_n$. Using the definition of $L^2$-projection and standard manipulations show that the true solution satisfies the discrete equation
\beq\label{fuldis1}
( \partial \pi\rho^n , w_h) +  \left(K(|\nabla \rho^n|)\nabla \rho^n, \nabla w_h\right)=(-\pi\rho_t^n+ \partial \pi\rho^n,w_h )-\langle \psi^n, w_h\rangle + (f^n, w_h), \forall w_h\in W_h. 
\eeq

\begin{theorem}
Let $\rho^n$ solve problem \eqref{weakform} and $\rho_h^n$ solve the fully  discrete Galerkin finite element approximation \eqref{fullydiscreteform} for each time step $n$, $n=1,\ldots, N$.  
There exists a positive constant $C$ independent of $h$ and $\Delta t$  such that if  the $\Delta t$ is sufficiently small then 
\beq\label{fulerrl2}
\max_{1\le n\le N}\norm{\rho^n-\rho_h^n} \le C\left( h^{\frac {r-1}2}+ \sqrt{\Delta t} \right).
\eeq  
\end{theorem}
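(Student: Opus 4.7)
The plan is to mirror the continuous Galerkin error analysis (Theorem~\ref{err-L2}) at the discrete time levels, introducing the backward-Euler consistency error as an additional right-hand side. Subtracting \eqref{fullydiscreteform} from \eqref{fuldis1}, and writing $\chi^n=\vartheta^n-\theta_h^n$ with $\vartheta^n=\rho^n-\pi\rho^n$ and $\theta_h^n=\rho_h^n-\pi\rho^n$, one gets the discrete error equation
\beqs
(\partial\theta_h^n,w_h)+\bigl(K(|\nabla\rho^n|)\nabla\rho^n-K(|\nabla\rho_h^n|)\nabla\rho_h^n,\nabla w_h\bigr)
=(\sigma^n,w_h),\qquad w_h\in W_h,
\eeqs
where $\sigma^n=\partial\pi\rho^n-\pi\rho_t^n=\pi(\partial\rho^n-\rho_t^n)$ is the consistency residual of the backward-Euler scheme.

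I would then test with $w_h=\theta_h^n$, use the elementary inequality $(\partial\theta_h^n,\theta_h^n)\ge\frac1{2\Delta t}(\|\theta_h^n\|^2-\|\theta_h^{n-1}\|^2)$, and invoke the same two ingredients from the semidiscrete analysis: the monotonicity estimate \eqref{Mono} combined with the bound \eqref{Odef} gives a lower bound $C_0\Mm^{-\gamma}\|\nabla\chi^n\|_{L^\beta}^2$ on the nonlinear form with $\nabla\chi^n$, and the continuity bound \eqref{r2} in $\nabla\vartheta^n$ gives an upper bound $C\Mm^{1/(\beta\lambda)}\|\nabla\vartheta^n\|_{L^\beta}$. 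The consistency term is handled by Cauchy--Schwarz, $|(\sigma^n,\theta_h^n)|\le\tfrac12\|\theta_h^n\|^2+\tfrac12\|\sigma^n\|^2$, and Taylor's theorem with integral remainder yields $\|\partial\rho^n-\rho_t^n\|^2\le\Delta t\int_{t_{n-1}}^{t_n}\|\rho_{tt}\|^2\,dt$, so that $\Delta t\sum_{n=1}^N\|\sigma^n\|^2\le C(\Delta t)\|\rho_{tt}\|_{L^2(I,L^2)}^2$ using $\|\pi\cdot\|\le\|\cdot\|$.

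Multiplying the testing inequality by $\Delta t$ and summing in $n$ from $1$ to $N$ produces
\beqs
\tfrac12\|\theta_h^N\|^2+C_0\Mm^{-\gamma}\Delta t\sum_{n=1}^N\|\nabla\chi^n\|_{L^\beta}^2
\le\tfrac12\|\theta_h^0\|^2+C\Delta t\sum_{n=1}^N\!\bigl(\Mm^{1/(\beta\lambda)}\|\nabla\vartheta^n\|_{L^\beta}+\|\theta_h^n\|^2\bigr)+C\,\Delta t\,\|\rho_{tt}\|_{L^2(I,L^2)}^2.
\eeqs
Since $\theta_h^0=\pi\rho^0-\rho_h^0=0$ by the choice of initial data, and the approximation property \eqref{prjpi} gives $\Delta t\sum_n\|\nabla\vartheta^n\|_{L^\beta}\le CT\,h^{r-1}\|\rho\|_{L^\infty(I,W^{r,\beta})}$, a discrete Gronwall inequality applied for $\Delta t$ small enough absorbs the $\|\theta_h^n\|^2$ term on the right and yields $\max_n\|\theta_h^n\|^2\le C(h^{r-1}+\Delta t)$. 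Taking square roots and adding $\|\vartheta^n\|\le Ch^r$ via the triangle inequality delivers the stated bound \eqref{fulerrl2}.

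The main obstacle I foresee is matching the scaling that forces the $h^{(r-1)/2}$ rate: the $\nabla\vartheta$ term enters linearly (not quadratically) because $K$ is merely continuous rather than Lipschitz with respect to a weight, so the right-hand side $\Delta t\sum\|\nabla\vartheta^n\|_{L^\beta}$ contributes $O(h^{r-1})$ to $\|\theta_h^N\|^2$, and hence only $O(h^{(r-1)/2})$ after the square root; this is the same suboptimality already present in Theorem~\ref{err-L2}. A secondary technical point is choosing the constants in Young's inequality and the smallness of $\Delta t$ so that the discrete Gronwall constant is independent of $N$, which is standard provided one absorbs $\|\theta_h^n\|^2$ only after the summation.
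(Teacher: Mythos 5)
Your skeleton coincides with the paper's: subtract \eqref{fuldis1} from \eqref{fullydiscreteform}, test with $\theta_h^n$, use the telescoping identity $(\partial\theta_h^n,\theta_h^n)\ge\frac1{2\Delta t}(\|\theta_h^n\|^2-\|\theta_h^{n-1}\|^2)$, the monotonicity bound \eqref{Mono} with the weight $\omega^n\ge C\Mm^{-\gamma}$ from \eqref{Odef}, the continuity bound $C\Mm^{1/(\beta\lambda)}\|\nabla\vartheta^n\|_{L^\beta}$, $\theta_h^0=0$, discrete Gronwall, and the triangle inequality. Where you genuinely diverge is the consistency term $(\pi\rho_t^n-\partial\pi\rho^n,\theta_h^n)$. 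You estimate it by Cauchy--Schwarz and the Taylor remainder, which requires $\rho_{tt}\in L^2(I,L^2(\Omega))$; this buys you an $O((\Delta t)^2)$ contribution to $\|\theta_h^N\|^2$ and hence a cleaner $\Delta t$-rate for the time error, stronger than the claimed $\sqrt{\Delta t}$. The paper deliberately avoids this hypothesis: it substitutes $\rho_{tt}=\nabla\cdot(K(|\nabla\rho|)\nabla\rho)_t+f_t$ from the equation, integrates by parts in space and then in time so that only $K(|\nabla\rho|)\nabla\rho$ itself (not its time derivative) and $f_t$ appear (see \eqref{lhs11}--\eqref{lhs13}), and controls these with the a priori bound $\|\nabla\rho\|_{L^\infty(I,L^\beta)}^\beta\le C\Mm$ of \eqref{mid4gradr} and the assumption $f_t\in L^2(I,L^2)$. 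The trade-off is that the paper's consistency term is then only $O(1)$ per step after Young's inequality, which is what produces the extra $\Mm\Delta t$-type term in \eqref{comb2} and limits the rate to $\sqrt{\Delta t}$.

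The one point you should make explicit is the regularity you are invoking. The a priori estimates of \S\ref{Bsec} give $\rho_t\in L^\infty(I,L^2)\cap L^2(I,L^2)$ but say nothing about $\rho_{tt}$, and the introduction advertises that the error estimates are obtained ``without any extra regularity assumption.'' For this degenerate equation, $\rho_{tt}\in L^2(I,L^2(\Omega))$ is not available from the theory quoted in the paper (and typically fails near $t=0$), so the step $\|\partial\rho^n-\rho_t^n\|^2\le\Delta t\int_{t_{n-1}}^{t_n}\|\rho_{tt}\|^2\,d\tau$ is the only place where your argument rests on an unproved hypothesis. If you are willing to add $\rho_{tt}\in L^2(I,L^2(\Omega))$ to the statement, your proof is complete (and gives a better time rate); if not, you need the paper's PDE-substitution device. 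Everything else in your outline, including the $h^{(r-1)/2}$ scaling coming from the linear (rather than quadratic) appearance of $\|\nabla\vartheta^n\|_{L^\beta}$, matches the paper's reasoning.
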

\begin{proof}
Subtracting \eqref{fuldis1} from \eqref{fullydiscreteform} gives the error equation  
%
\beq\label{DiscreteErrEq}
(\partial \theta_h^n , w_h) +  \left(K(|\nabla \rho_h^n|)\nabla \rho_h^n- K(|\nabla \rho^n|)\nabla \rho^n, \nabla w_h\right)=(   \pi\rho_t^n-\partial \pi\rho^n,w_h ).  
\eeq  
Selecting $w_h =\theta_h^n,$ we rewrite above equation as form 
\beq\label{eereq}
\begin{split}
&(\partial \theta_h^n , \theta_h^n ) +  \left(K(|\nabla \rho^n|)\nabla \rho^n- K(|\nabla \rho_h^n|)\nabla \rho_h^n, \nabla \rho ^n- \nabla\rho_h^n \right)\\
&\hspace{2cm}=\left(K(|\nabla \rho^n|)\nabla \rho^n- K(|\nabla \rho_h^n|)\nabla \rho_h^n, \nabla \vartheta^n \right) +( \pi\rho_t^n -  \partial \pi\rho^n,\theta_h^n  ).  
\end{split} 
\eeq
For the first term, we have the identity 
\beq\label{rhs1}
\begin{split}
(\partial \theta_h^n , \theta_h^n )&=\left(\partial \theta_h^n, \frac{\theta_h^n +\theta_h^{n-1}}{2}+ \frac {\Delta t} 2 \partial \theta_h^n\right)\\
&= \frac 1{2\Delta t}\left(\norm{\theta_h^n}^2  -\norm{\theta_h^{n-1}}^2\right)+\frac{\Delta t}{2}\norm{\partial \theta_h^n }^2 . 
\end{split}
\eeq
For the second term, the monotonicity of $K(\cdot)$ in \eqref{Mono} yields 
 \beq\label{rhs2} 
 (K(|\nabla \rho^n|)\nabla \rho^n -K(|\nabla \rho_h^n|)\nabla \rho_h^n, \nabla\rho^n -\nabla\rho_h^n)  \ge C_0\omega^n \norm{\nabla \chi^n}_{L^{\beta}}^2 
 \eeq
 with 
 $
 \omega^n= \omega(t_n) = \left(1+\max\left\{\norm{\nabla \rho_h^n}_{L^{\beta}(\Omega)}, \norm{\nabla \rho^n}_{L^{\beta}(\Omega)} \right \}\right)^{-a} .
  $\\
For third term, using \eqref{i:ineq2}, Holder's inequality, there is a positive constant $C$  independence of $h, \Delta t, n$ such that    
\beq\label{lhsterm1}
\begin{split}
\left(K(|\nabla \rho^n|)\nabla \rho^n- K(|\nabla \rho_h^n|)\nabla \rho_h^n, \nabla \vartheta^n \right)\le C
\Mm ^{\frac 1{\beta\lambda}} \norm{\nabla \vartheta^n}_{L^\beta}.
\end{split}
\eeq
 For the last term, it follows from using $L^2$-projection and Taylor expand that  
  \beq\label{lhs1}
  \begin{split}
 ( \pi\rho_t^n -  \partial \pi\rho^n,\theta_h^n  )&=( \rho_t^n -  \partial\rho^n,\theta_h^n  ) =\left(\frac 1{\Delta t} \int_{t_{n-1}}^{t_n} \rho_{tt}(\tau) (\tau-t_{n-1})   d\tau, \theta_h^n \right).
 \end{split}
 \eeq
Differentiating equation \eqref{maineq} in time provides,   
  \beqs
\rho_{tt} = \nabla\cdot (K(|\nabla \rho|)\nabla \rho )_t+f_t.
\eeqs
This and \eqref{lhs1} imply  
  \beq\label{lhs11}
 \begin{split}
( \pi\rho_t^n -  \partial \pi\rho^n,\theta_h^n  )&= \left(\frac 1{\Delta t} \int_{t_{n-1}}^{t_n} f_t (\tau) (\tau-t_{n-1})   d\tau  , \theta_h^n \right)\\
&-\left(\frac 1{\Delta t} \int_{t_{n-1}}^{t_n}  (K(|\nabla \rho|) \nabla \rho )_t   (\tau-t_{n-1})   d\tau  , \nabla\theta_h^n \right)=I_1+I_2. 
 \end{split}
 \eeq

  For the first term of left hand side in \eqref{lhs11}, 
 \beq\label{lhs12}
 \begin{split}
 I_1&\le\frac 1{\Delta t}\norm{  \int_{t_{n-1}}^{t_n} f_t (\tau) (\tau-t_{n-1})   d\tau}   \norm{\theta_h^n} \\
 & \le\frac 1{\Delta t} \norm{f_t}_{L^2(I_n,L^2)} \left[\int_{t_{n-1}}^{t_n} (\tau-t_{n-1})^2   d\tau\right]^{\frac 12}   \norm{\theta_h^n}\\
 &\le \frac{1} 6 \Delta t\norm{f_t}_{L^2(I_n,L^2)}^2 + \frac12 \norm{\theta_h^n}^2,
 \end{split}
 \eeq
where $I_n=[t_{n-1},t_n].$  
 
For the second term of left hand side in \eqref{lhs11}, using integration by part, 
triangle inequality, \eqref{i:ineq2}, and H\"older's inequality we find that  
	 \beqs
	 \begin{aligned}
I_2&\le \frac 1{\Delta t}\left|\Big(\int_{t_{n-1}}^{t_n} K(|\nabla \rho|) \nabla \rho d\tau, \nabla\theta_h^n\Big)\right| +\left| \left(K(|\nabla \rho^n|) \nabla \rho^n, \nabla\theta_h^n  \right )\right|\\
& \le \frac C{\Delta t}\int_{t_{n-1}}^{t_n}  \left(|\nabla \rho|^{\beta-1} , |\nabla\theta_h^n|\right) d\tau +C(|\nabla \rho^n|^{\beta-1}, |\nabla\theta_h^n|)\\
& \le \frac C{\Delta t}\left[\int_{t_{n-1}}^{t_n}\norm{\nabla \rho }_{L^\beta}^{\beta-1}d\tau\right]\norm{\nabla\theta_h^n}_{L^{\beta}} +C\norm{\nabla\rho^n}_{L^{\beta}}^{\beta-1} \norm{\nabla\theta_h^n}_{L^{\beta}}.        
      \end{aligned}
      \eeqs
According to \eqref{mid4gradr},  
   \beq\label{lhs13}
   \begin{split}
      I_2 &\le C\Mm^{\frac {\beta-1}\beta}\norm{\nabla\theta_h^n}_{L^{\beta}}\\
      &\le C\Mm^{\frac {\beta-1}\beta}\norm{\nabla\chi^n}_{L^{\beta}}+C\Mm^{\frac{\beta-1}\beta}\norm{\nabla \vartheta^n}_{L^\beta}\\
      &\le \varep\norm{\nabla\chi^n}_{L^{\beta}}^2 + C\varep^{-1}\Mm^{2\frac {\beta-1}\beta}+C\Mm^{\frac{\beta-1}\beta}\norm{\nabla \vartheta^n}_{L^\beta}.
      \end{split}
      \eeq 
 Combining \eqref{lhs11}, \eqref{lhs12} and \eqref{lhs13} and using the fact that $\frac 1{\lambda} =\frac{\beta-1}{\beta}$,    
\beq\label{lhsterm2}
\begin{split}
 ( \pi\rho_t^n -  \partial \pi\rho^n,\theta_h^n  ) &\le \frac{1} 6 \Delta t\norm{f_t}_{L^2(I_n,L^2)}^2 + \frac12 \norm{\theta_h^n}^2\\
&\quad +\varep\norm{\nabla\chi^n}_{L^{\beta}}^2 + C\varep^{-1}\Mm^{\frac 2 \lambda}+C\Mm^{\frac 1 \lambda}\norm{\nabla \vartheta^n}_{L^\beta}.
 \end{split}
\eeq
It follows from \eqref{rhs1}, \eqref{rhs2}, \eqref{lhsterm1} and \eqref{lhsterm2}  that 
\beq\label{comb1}
\begin{aligned}
 \frac 1{2\Delta t}\left(\norm{\theta_h^n}^2  -\norm{\theta_h^{n-1}}^2\right)+  C_0\omega^n \norm{\nabla\chi^n}_{L^{\beta}}^2\le \varep \norm{\nabla \chi^n}_{L^{\beta}}^2 +C(\Mm ^{\frac 1{\beta\lambda}}+\Mm ^{\frac 1{\lambda}} ) \norm{\nabla \vartheta^n}_{L^\beta}\\
+ C\Delta t\norm{f_t}_{L^2(I_n,L^2)}^2+ \frac12 \norm{\theta_h^n}^2+ C\varep^{-1}\Mm^{\frac 2 \lambda}.
\end{aligned}
\eeq
Selecting $\varep =\frac{C_0\omega^n}2$ in \eqref{comb1}, we obtain  
\beq\label{comb1a}
\begin{aligned}
 &\frac 1{\Delta t}\left(\norm{\theta_h^n}^2  -\norm{\theta_h^{n-1}}^2\right)+  C_0\omega^n \norm{\nabla\chi^n}_{L^{\beta}}^2\le C(\Mm ^{\frac 1{\beta\lambda}}+\Mm ^{\frac 1{\lambda}} ) \norm{\nabla \vartheta^n}_{L^\beta}\\
 &\qquad+ C\Delta t\norm{f_t}_{L^2(I_n,L^2)}^2+  \norm{\theta_h^n}^2+ C(\omega^n)^{-1}\Mm^{\frac 2 \lambda}+C\Mm^{\frac 1 \lambda}\norm{\nabla \vartheta^n}_{L^\beta}.
\end{aligned}
\eeq
Dropping the second term in \eqref{comb1a} and using \eqref{Odef}  and the fact $\frac 2 \lambda+\gamma=1$, we get
\beqs
\begin{split}
 \frac 1{\Delta t}\left(\norm{\theta_h^n}^2  -\norm{\theta_h^{n-1}}^2\right)&\le \norm{\theta_h^n}^2+ C \left\{ (\Mm ^{\frac 1{\beta\lambda}} +\Mm^{\frac 1\lambda })\norm{\nabla \vartheta^n}_{L^\beta}+ \Mm^{\frac 2 \lambda+\gamma}\right\} + C \Delta t\norm{f_t}_{L^2(I_n,L^2)}^2.
\end{split}
\eeqs
Summing from $n=0$ to $n=m-1$ with $0<m\le N$ and using the fact that $\theta_h(0)=0$, we obtain  
\beq\label{comb2}
\begin{aligned}
 \norm{\theta_h^m}^2 \le \Delta t\sum_{n=0}^{m-1} \norm{\theta_h^n}^2 &+ C\Big\{ (\Mm^{\frac{1}{\beta\lambda}} +M^{\frac 1\lambda}) \sum_{n=0}^{m-1} \Delta t \norm{\nabla \vartheta^n}_{L^{\beta} }\\
 & + (\Delta t)^2\sum_{n=0}^{m-1}\norm{f_t}_{L^2(I_n,L^2)}^2 +(m-1)\Mm \Delta t\Big\}.
\end{aligned}
\eeq
An application of the discrete Gronwall's inequality shows that for $\Delta t$ sufficiently small,   
\beq\label{comb}
\begin{aligned}
\max_{0\le n\le N}\norm{\theta_h^n}^2  &\le C\Big\{(\Mm ^{\frac 1{\beta\lambda}} +\Mm^{\frac 1\lambda }) \sum_{n=0}^{m-1} \Delta t \norm{\nabla \vartheta^n}_{L^{\beta} } +\Mm \Delta t\Big\}+C(\Delta t)^2\norm{f_t}_{L^2(I,L^2)}^2,
\end{aligned}
\eeq
 which implies 
 \beq\label{comb4}
\begin{aligned}
\max_{0\le n\le N}\norm{\theta_h^n} &\le C\Big\{ (\Mm ^{\frac 1{2\beta\lambda}} +\Mm^{\frac 1{2\lambda} }) h^{\frac{r-1} 2}\Big(\sum_{n=0}^{N-1}\Delta t\norm{\rho^n}_{W^{r,\beta}}\Big)^{\frac 1 2}+ \Mm^{\frac 1 2}\sqrt{\Delta t}  \Big\}\\
&\quad+ C\Delta t\norm{f_t}_{L^2(I,L^2)}.
\end{aligned}
\eeq 
Finally, the result follows from \eqref{comb4} and   
$
 \norm{\rho^n-\rho_h^n}\le \norm{\theta_h^n} +\norm{\vartheta^n}. 
$
 \end{proof}
 
Now we give the error estimate for gradient in fully discrete version.      

\begin{theorem} Let $\rho$ solve problem \eqref{weakform} and $\rho_h^n$ solve the fully  discrete Galerkin  finite element approximation \eqref{fullydiscreteform} for each time step $n$, $n=1,\ldots, N$.  Then there exists a positive constant $C$ independent of $h$ and $\Delta t$ satisfying
\beq\label{ErrGradFul}
\norm{\nabla \rho^n_h - \nabla \rho^n}_{L^\beta}\le C ( h^{\frac {r-1} 4} +\sqrt[4]{\Delta t}). 
\eeq
 
\end{theorem}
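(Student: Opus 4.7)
The plan is to mimic the semidiscrete gradient estimate \eqref{ErrGrad}: derive an inequality of the form $\norm{\nabla\chi^n}_{L^\beta}^2 \le C\Mm^\gamma(\cdots)$ in which the parenthesized quantity is of order $h^{r-1}+\sqrt{\Delta t}$, then extract a square root. This is natural because the target rate $h^{(r-1)/4}+\sqrt[4]{\Delta t}$ is precisely the square root of the $L^2$-rate already established in \eqref{fulerrl2}.

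Concretely, I would begin from the error identity \eqref{eereq}, obtained by testing \eqref{DiscreteErrEq} with $w_h=\theta_h^n$ and decomposing $\nabla\theta_h^n=\nabla\vartheta^n-\nabla\chi^n$. Monotonicity \eqref{Mono} combined with the uniform bound \eqref{Odef} on $\omega^n$ yields the lower bound $C_0\Mm^{-\gamma}\norm{\nabla\chi^n}_{L^\beta}^2$ on the nonlinear diffusion term, so after rearrangement
\begin{equation*}
C_0\Mm^{-\gamma}\norm{\nabla\chi^n}_{L^\beta}^2
\le |(\partial\theta_h^n,\theta_h^n)|
+ |(K(|\nabla\rho^n|)\nabla\rho^n-K(|\nabla\rho_h^n|)\nabla\rho_h^n,\nabla\vartheta^n)|
+ |(\pi\rho_t^n-\partial\pi\rho^n,\theta_h^n)|.
\end{equation*}
The three right-hand pieces are controlled by reusing estimates already in the paper: the middle term is at most $C\Mm^{1/(\beta\lambda)}\norm{\nabla\vartheta^n}_{L^\beta}$ by \eqref{i:ineq2} and H\"older (as in \eqref{lhsterm1}); the consistency error is handled by the Taylor expansion trick of \eqref{lhs11}--\eqref{lhs13}, contributing $O(\Delta t)+C\norm{\theta_h^n}^2+C\Mm^{1/\lambda}\norm{\nabla\vartheta^n}_{L^\beta}+\varep\norm{\nabla\chi^n}_{L^\beta}^2$ (the last absorbable on the left); and $|(\partial\theta_h^n,\theta_h^n)|\le\norm{\partial\theta_h^n}\,\norm{\theta_h^n}\le C\mathcal{D}^2\norm{\theta_h^n}$ using a fully discrete analogue of Theorem~\ref{rhderv}. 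Collecting gives
\begin{equation*}
\norm{\nabla\chi^n}_{L^\beta}^2\le C\Mm^\gamma\Bigl(\mathcal{D}^2\norm{\theta_h^n}+\Mm^{1/(\beta\lambda)}\norm{\nabla\vartheta^n}_{L^\beta}+\Delta t\bigl(1+\norm{f_t}_{L^2(I_n,L^2)}^2\bigr)\Bigr).
\end{equation*}
Substituting \eqref{fulerrl2} (so $\norm{\theta_h^n}\le C(h^{(r-1)/2}+\sqrt{\Delta t})$) together with the projection estimate $\norm{\nabla\vartheta^n}_{L^\beta}\le Ch^{r-1}$, each contribution is $O(h^{r-1}+\sqrt{\Delta t})$. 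Taking a square root and recalling $\chi^n=\rho^n-\rho_h^n$ yields the asserted bound \eqref{ErrGradFul}.

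The central difficulty is the discrete time-derivative term $(\partial\theta_h^n,\theta_h^n)$: we want a pointwise-in-$n$ estimate, so telescoping via the identity \eqref{rhs1} is not an option, and the crude bound $\norm{\partial\theta_h^n}\le 2\Delta t^{-1}\max_n\norm{\theta_h^n}$ would destroy the rate. The remedy is a fully discrete counterpart of Theorem~\ref{rhderv}, giving $\max_n\norm{\partial\theta_h^n}\le C\mathcal{D}^2$ uniformly in $h$ and $\Delta t$. This bound is obtained by differencing the scheme \eqref{fullydiscreteform} in time, testing with $\partial\rho_h^n$, controlling the resulting $K'$-term through \eqref{i:ineq3} exactly as in \eqref{I1}, and closing with a discrete Gronwall inequality---mirroring the continuous argument with the backward difference in place of $\partial_t$. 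Once this uniform bound on $\partial\theta_h^n$ is in hand, the remainder of the proof is bookkeeping structurally identical to the semidiscrete gradient estimate.
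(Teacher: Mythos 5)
Your overall strategy coincides with the paper's: test the error equation with $\theta_h^n$, use the monotonicity bound $C_0\omega^n\norm{\nabla\chi^n}_{L^\beta}^2$ with $(\omega^n)^{-1}\le C\Mm^\gamma$ from \eqref{Odef}, control the right-hand side by the already-proved $L^2$ rate \eqref{comb4} together with a uniform bound on the discrete time difference, and take a square root. Two remarks on where you diverge.

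First, the genuine gap. You propose to handle the consistency term $(\pi\rho_t^n-\partial\pi\rho^n,\theta_h^n)$ by the Taylor/integration-by-parts device of \eqref{lhs11}--\eqref{lhs13} and then record its contribution as $O(\Delta t)+C\norm{\theta_h^n}^2+C\Mm^{1/\lambda}\norm{\nabla\vartheta^n}_{L^\beta}+\varep\norm{\nabla\chi^n}_{L^\beta}^2$. But \eqref{lhs13} does not deliver that: after absorbing $\varep\norm{\nabla\chi^n}_{L^\beta}^2$ into the left-hand side with $\varep\sim\omega^n$, the residue is $C(\omega^n)^{-1}\Mm^{2/\lambda}\le C\Mm^{\gamma+2/\lambda}=C\Mm$, an $O(1)$ quantity per time level, not $O(\Delta t)$. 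In the summed $L^2$ argument this residue is tolerable only after multiplication by $\Delta t$; in a pointwise-in-$n$ gradient estimate it would destroy the rate, and your collected inequality has silently dropped it. The paper avoids the issue entirely by never invoking the Taylor expansion here: it merges $(\partial\theta_h^n,\theta_h^n)$ with the consistency term, using $(\pi\rho_t^n,\theta_h^n)=(\rho_t^n,\theta_h^n)$ and $\partial\theta_h^n=\partial\rho_h^n-\partial\pi\rho^n$, to obtain the single term $(\rho_t^n-\partial\rho_h^n,\theta_h^n)\le(\norm{\rho_t^n}+\norm{\partial\rho_h^n})\norm{\theta_h^n}\le 2\Dd^2\norm{\theta_h^n}$. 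Once you have your uniform bound $\norm{\partial\rho_h^n}\le C\Dd^2$ in hand, you should do the same; the Taylor machinery is neither needed nor adequate at this point.

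Second, on the time-difference bound itself: you are right that this is the crux, and your insistence on a genuinely fully discrete analogue of Theorem~\ref{rhderv} (difference the scheme in time, test with $\partial\rho_h^n$, note that monotonicity \eqref{Mono} makes the resulting nonlinear term nonnegative --- no $K'$ estimate is actually needed in the discrete setting --- and close with discrete Gronwall, plus an initial-step estimate playing the role of the $t_0^{-1}$ weight) is more careful than the paper, which simply writes $\norm{\partial\rho_h^n}=(\Delta t)^{-1}\norm{\int_{t_{n-1}}^{t_n}\rho_{h,t}\,dt}\le\max\norm{\rho_{h,t}}\le\Dd^2$, an identification that is literally valid only for the semidiscrete solution sampled at $t_n$. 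That auxiliary lemma is the one substantive piece of work your proposal defers; with it, and with the consistency term handled as above, the rest of your argument goes through as in the paper.
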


\begin{proof}
Selecting $w_h =\theta_h^n,$ in \eqref{DiscreteErrEq} and rewriting the resulting equation as form   
\begin{multline*}
 \left((K(|\nabla \rho^n|)\nabla \rho^n- K(|\nabla \rho_h^n|)\nabla \rho_h^n, \nabla \rho ^n- \nabla\rho_h^n \right)\\
=\left(K(|\nabla \rho^n|)\nabla \rho^n- K(|\nabla \rho_h^n|)\nabla \rho_h^n, \nabla \vartheta^n \right) +( \rho_t^n -  \partial \rho_h^n,\theta_h^n  ).  
\end{multline*} 
Due to \eqref{Kcont}, Cauchy-Schwartz and triangle inequality, one has  
\beqs
\omega^n\norm{\nabla \rho^n_h - \nabla \rho^n}_{L^\beta}^2\le C \left(\norm{ \partial\rho_h^n} +\norm{\rho_t^n} \right)\norm{ \theta_h^n} +C(|\nabla \rho^n -\nabla \rho_h^n |, |\nabla\vartheta^n| ). 
\eeqs
Since
\begin{align*}
\norm{\partial \rho_{h}^n} =(\Delta t)^{-1}\norm{ \int_{t_{n-1}}^{t_n} p_{h,t} dt}
 \le (\Delta t)^{-1} \int_{t_{n-1}}^{t_n} \norm{p_{h,t}} dt\le \max_{[T/N,T]}\norm{p_{h,t}}\le \Dd^2, 
\end{align*}
$ 
\norm{p_t^n} \le \max_{[T/N,T]}\norm{p_t}\le \Dd^2 
$ and \eqref{lhsterm1}, we obtain      
\beqs
\omega^n\norm{\nabla \rho^n_h - \nabla \rho^n}_{L^\beta}^2\le C \Dd^2 \norm{ \theta_h^n} + C\Mm^{\frac 1{\beta\lambda}} \norm{\nabla\vartheta^n}_{L^\beta}. 
\eeqs
Using estimate in \eqref{comb4} and \eqref{Odef} shows that
\beqs
\begin{split}
\norm{\nabla \rho^n_h - \nabla \rho^n}_{L^\beta}^2&\le C \Dd^2 \Mm^{\gamma} \Big\{ (\Mm^{\frac 1{2\beta\lambda}}+\Mm^{\frac 1{2\lambda} } ) h^{\frac{r-1}2}\Big(\sum_{n=0}^{N-1} \Delta t \norm{\rho^n}_{W^{r,\beta}}\Big)^{\frac 1 2}+ \Mm^{\frac 1 2}\sqrt{\Delta t} \\
&\quad + \Delta t\norm{f_t}_{L^2(I,L^2)} \Big\}+ C\Mm^{\gamma+\frac 1{\beta\lambda}} h^{r-1}\norm{\rho^n}_{W^{r,\beta}}, 
\end{split}
\eeqs
which proves \eqref{ErrGradFul}. The proof is complete. 
\end{proof}
\section{Numerical results} \label{Num-result}

In this section, we give a two simple numerical experiments using Galerkin finite element method in the two dimensional region to illustrate the convergent theory. For simplicity, we test the convergence of our method with the Forchheimer two-term law $g(s)=1+s$. Equation \eqref{Kdef} $sg(s)=\xi,$ $s\ge0$ gives
 $ s= \frac {-1 +\sqrt{1+4\xi}}{2}$ and hence 
$$
K(\xi) =\frac1{g(s(\xi)) } =\frac {2}{1+\sqrt{1+4\xi}}.
$$ 
The region is selected is unit square, i.e $\Omega=[0,1]^2$ . We use the Lagrange element of order $r=2$ on the unit square in two dimensions. We used FEniCS \cite{fenics} to perform our numerical simulations. We divide the unit square into an $ N\times N$ mesh of squares, each then subdivide into two right triangles using the \textsf{UnitSquareMesh} class in FEniCS. For each mesh, we solve the generalized Forchheimer equation numerically. The error control in each nonlinear solve is $\varep =10^{-6}$.  Our problem is solved at each time level start at $t=0$ until final time $T = 1$. At this time, we measured the $L^2$-errors of pressure and $L^{\beta}$-errors of gradient of pressure and velocity. Here $\beta = 2 - a = 2- \frac{{\mathrm deg} (g)}{{\mathrm deg} ( g) + 1}=\frac 32$.

{\bf Example 1.} The analytical solution is as follows  
\beqs
\rho(x,t)=e^{-2t}\left[\frac 12 (x_1^2+x_2^2)-\frac 13(x_1^3-x_2^3) \right]+1, \quad \forall (x,t)\in \Omega\times[0,1].  
\eeqs
The forcing term $f$ is determined accordingly to the analytical solution by equation $p_t - \nabla \cdot (K(|\nabla \rho|)\rho ) = f$. Explicitly,  
\beqs
\begin{split}
 f(x,t)&=-2e^{-2t}  \Big[\frac12 (x^2_1 +x^2_2)-\frac 13(x^3_1+x^3_2)\Big]-\frac {4e^{-2t}(1-x_1-x_2)}{1+\sqrt{1+4e^{-2t}W(x)  }}\\
&+\frac{4e^{-4t}\Big[ x_1^2(1-x_1)^2(1-2x_1)+ x_2^2(1-x_2)^2(1-2x_2) \Big] }{ W(x) \left(1+\sqrt{1+4e^{-2t}W(x)}\right)^2 \sqrt{1+4e^{-2t}W(x)} }.
\end{split}
\eeqs   
where $W(x) = \sqrt{x_1^2(1-x_1)^2+x_2^2(1-x_2)^2}$. The initial data $\rho^0(x) =\frac 12 (x_1^2+x_2^2)-\frac 13(x_1^3-x_2^3)+1$ and the Neumann boundary condition $\psi(x,t)=0.$
 The numerical results are listed as the following table.   
 \vspace{0.3cm}  
\begin{center}
\begin{tabular}{l| c| c| c| c}
\hline
N    & \qquad $\norm{\rho-\rho_h}$ \qquad  &  \quad Rates  \quad&  \quad$\norm{\nabla(\rho-\rho_h)}_{L^{\beta}(\Omega)}$ \quad & \quad  Rates \quad  \\
\hline
4	& 6.33e-02   &-          	&  4.51E-01		&-\\
8	&5.50E-02		&0.20  		& 4.07E-01  			&0.15\\
16	&4.52E-02		&0.28    	& 3.70E-01 		&0.14\\
32	&3.50E-02		&0.37  		& 3.22E-01 		&0.20\\
64	&2.53E-02		&0.47		&  2.70E-01		&0.25\\
128&1.73E-02		&0.55		& 2.21E-01 		&0.29\\
256&1.13E-02		&0.61		&  1.79E-01		&0.31\\
512&7.19E-03		& 0.63		&  1.43E-01		&0.32\\
\hline
\end{tabular}
\vspace{0.3cm}

Table 1. {\it Convergence study for generalized Forchheimer flows using Galerkin finite element method with zero flux on the boundary in 2D.}

\end{center}

{\bf Example 2.} The analytical solution is $\rho(x,t)=x_1x_2e^{-t}+1$ for all  $(x,t)\in \Omega\times [0,1]$. The forcing term $f$, initial condition and Neumann boundary condition are determined accordingly to the analytical solution as follows 
\beqs
 f(x,t)= -e^{-t}x_1x_2+\frac{8e^{-2t}x_1x_2}{\sqrt{x_1^2+x_2^2}\left(1+\sqrt{1+4e^{-t}\sqrt{x_1^2+x_2^2}}\right)^2\sqrt{1+4e^{-t}\sqrt{x_1^2+x_2^2}}},
\eeqs   
and
\beqs
\rho^0(x) =x_1x_2+1, \quad \psi(x,t)=\frac{2e^{-t}} {1+\sqrt{1+4e^{-t}\sqrt{x_1^2+x_2^2}} }
\begin{cases} 
x_2 &\text { on } x_1=0,\\
-x_2 & \text { on } x_1=1,\\
-x_1 & \text { on } x_2=1,\\
x_1 & \text { on } x_2=0.
 \end{cases}
\eeqs
The numerical results are listed in below table 
 \vspace{0.3cm}  
\begin{center}
\begin{tabular}{l|c| c|c| c}
\hline
N    &  \qquad $\norm{\rho-\rho_h}$   \quad &   \quad Rates  \quad & $ \quad \norm{\nabla(\rho-\rho_h)}_{L^{\beta}(\Omega)}$  \quad  &  \quad  Rates  \quad   \\
\hline
4	&4.40E-02     	&-           	& 2.67E-02 		&-\\
8	&2.24E-02		&0.97  		& 2.02E-02 		&0.40\\
16	&1.15E-02		&0.96    	& 1.37E-02 		&0.56\\
32	&5.90E-03		&0.96  		& 8.53E-03 		&0.68\\
64	&3.01E-03		&0.97		& 4.99E-03 		&0.77\\
128&1.53E-03		&0.98		& 2.79E-03 		&0.84\\
256&7.70E-04		&0.99		& 1.52E-03 		&0.88\\
512&3.94E-4		&0.99		&8.28E-4  		 	&0.92\\
\hline
\end{tabular}
\vspace{0.3cm}

Table 2. {\it Convergence study for generalized Forchheimer flow using Galerkin finite element method with nonzero flux on the boundary in 2D.}
\end{center}

\medskip
\textbf{Acknowledgment.} The author wishes to thank Dr. Luan Hoang for valuable assistance in discussion and suggestions.


\bibliographystyle{siam}

\end{document}